\begin{document}
%\large
\newcommand{\diag}{{\rm diag}}
\newtheorem{teorema}{Theorem}
\newtheorem{lemma}{Lemma}
\newtheorem{utv}{Proposition}
\newtheorem{svoistvo}{Property}
\newtheorem{sled}{Corollary}
\newtheorem{con}{Conjecture}
\newtheorem{zam}{Remark}

\author{A.A. Taranenko\footnote{Sobolev Institute of Mathematics, taa@math.nsc.ru}}
\title{Algebraic properties of perfect structures}
\date{April 19, 2020}
\maketitle
\begin{abstract}
A perfect structure is a triple $(M,P,S)$ of matrices $M, P$ and $S$ of consistent sizes such that $MP = PS$. Perfect structures comprise similar matrices, eigenvectors, perfect colorings (equitable partitions) and graph coverings. In this paper we study general algebraic properties of perfect structures and characterize all perfect structures with identity or unity matrix $M$. Next, we consider a graph product generalizing most standard products  (e.g. Cartesian, tensor, normal, lexicographic graph products). For this product we propose a construction of perfect structures and prove that it can be reversed for eigenvectors. Finally, we apply obtained results to calculate the spectra of several classes of graphs and to prove some properties of perfect colorings.
\end{abstract}

\textbf{Keywords:} perfect structure, graph product, perfect coloring, equitable partition, eigenfunction, graph covering
\medskip

\textbf{MSC 2010:} 	05C50, 15A18, 05C15 

\section*{Introduction}

The main aim of this paper is to introduce and expose the notion of a perfect structure. This notion binds together such basic linear algebraic objects as eigenvectors and similar matrices with a vast enough class of combinatorial structures, namely with perfect colorings and graph coverings. 

At times, perfect colorings arose in the literature under different names. One of the most known of them is ``equitable partition'' that was introduced by Delsarte~\cite{delsart} while he studied completely regular codes. In book~\cite{doob} objects equivalent to perfect colorings are named as divisors of graphs. An algebraic definition of perfect colorings firstly appeared in book~\cite{godalgcomb}. At last, in book~\cite{godsil} one finds some information on graph coverings that can be considered as perfect colorings of a special type. Using an algebraic approach similar to the present one, the same definition of perfect structures and some of their properties arose earlier in paper~\cite{krotov}.

Strictly speaking, all the above books and papers have already contained most properties of perfect structures or graph products that will be in the center of attention of this paper. Meanwhile, in this study we try to cover an as wide as possible range of basic results for perfect structures treating them under consolidated terminology. 

Let us describe the structure of the paper.
Section~\ref{intersec} is preliminary and aims to recall basic notions of algebra and graph theory, including the most popular graph products. In Section~\ref{perfsec} we introduce the notion of perfect structures and obtain their main algebraic properties. In particular, in Subsection~\ref{structIJ} we completely describe two basic classes of perfect structures used as building bricks for more complicated structures: perfect structures with the identity and unity adjacency matrices. 

In Section~\ref{prodsec} we study perfect structures in graph products. We introduce a generalized graph product so that the most common products (such as the tensor, Cartesian, normal and lexicographic products of graphs~\cite{handprod}) will be special cases of our generalized product. In  Subsection~\ref{prodpar}  we propose a general construction of the product of perfect structures in the introduced graph product and specify it for different graph products and perfect structures. 
 
 In Subsection~\ref{projectpar} we prove a contraction theorem for eigenfunctions that reverses the structure product construction. A special case of this theorem was the main tool for the bound on the minimal support of eigenfunctions in Hamming graphs~\cite{val}, and later the product constructions of eigenfunctions in these graphs were used in~\cite{valvor}. 
 
 Section~\ref{primsec} is devoted to applications of previous results to certain classes of graphs and perfect colorings. The spectral graph properties are extensively studied in the algebraic graph theory (see, for example,~\cite{brower}).  So in Subsection~\ref{spectrsection} we apply the graph product method to calculating the spectra of graphs. Some corollaries of our results for perfect colorings are obtained in Subsection~\ref{perfcolpar}.
 
Techniques and results of this paper can be also applied for the characterization problem of all parameter matrices of perfect colorings in a given graph. For example, in~\cite{flaass} this problem was studied for perfect $2$-colorings of the Hamming graphs $H(n,2)$ (colorings of the Boolean $n$-hypercubes). Next, all perfect colorings of the prism graphs were described in~\cite{lis}. At last, in~\cite{perfz2} there are lists of all perfect colorings up to $9$ colors of the infinite square grid (i.e., the Cartesian product of two infinite chains) and their parameter matrices.

\section{Notions and definitions} \label{intersec}

\subsection{Graphs, adjacency matrices, and spectra}

Given a directed multigraph $G = (V,E)$ with the vertex set $V$, $|V| = n$, and the arc set $E$, the \textit{adjacency matrix} $M = M(G)$ of $G$ is an $n \times n$ matrix with entries $m_{i,j}$ equal to the number of arcs from vertex $i$ to vertex $j$. The adjacency matrix of a simple undirected graph is exactly a symmetric $(0,1)$-matrix  with zero entries within the main diagonal. The \textit{complete graph} has the adjacency matrix with all entries, except diagonal, equal to one.

For every complex $n \times n$ matrix $M$ we can assign a digraph (allowing loops) on $n$ vertices with arcs labeled by entries of the matrix $M$. So every complex matrix can be treated as the adjacency matrix of some ``graph''. Meanwhile, in order to preserve certain algebraic properties of adjacency matrices of graphs, we focus only on diagonalizable matrices. Recall that a complex square matrix is called \textit{diagonalizable} if it is similar to a  diagonal matrix (has a diagonal Jordan normal form). It is well known that every real symmetric matrix is diagonalizable and that a complex $n\times n$ matrix $M$ is diagonalizable if and only if there is a basis of the space $\mathbb{C}^n$ composed by eigenvectors. 

In the rest of the paper we deal with only complex adjacency matrices but, under some natural assumptions, most of the future results remain true for adjacency matrices with entries from other fields. 

We will say that a graph $G$ is \textit{regular} if its adjacency matrix is symmetric and has equal row and column sums which are called the \textit{degree} of the regular graph.

The \textit{spectrum} of a graph $G$ is the spectrum of its adjacency matrix $M(G)$, i.e., the multiset of the eigenvalues of $M(G)$.

In what follows, $I$ denotes the identity matrix having unity entries on the main diagonal and zeroes elsewhere, $J$ is used for the unity matrix whose all entries are equal to one, and $\diag(\lambda_1, \ldots, \lambda_n)$ is a diagonal matrix with $ \lambda_1, \ldots, \lambda_n$ on the main diagonal. We often do not specify orders of matrices when they are clear from the surrounding.

\subsection{Kronecker product of matrices}

Given matrices $M = (m_{i,j})_{i,j=1}^n$ and $L$ of orders $n$ and $r$ respectively, the \textit{Kronecker product} $M \otimes L$ is the following block matrix of order $nr$: 

$$M \otimes L = \left(\begin{array}{ccc} m_{1,1} L & \cdots & m_{1,n} L \\ \vdots & \ddots & \vdots  \\ m_{n,1} L & \cdots & m_{n,n} L \end{array} \right).$$

There are the main properties of the Kronecker product that will be used in this paper:

\begin{enumerate}
\item $M \otimes (L \otimes S) = (M \otimes L) \otimes S$.
\item $M \otimes (L + S) = M \otimes L + M \otimes S$ and $(M + L) \otimes S = M \otimes S + L \otimes S$.
\item $\alpha (M \otimes L) = (\alpha M) \otimes L = M \otimes (\alpha L)$  for each $\alpha \in \mathbb{C}$.
\item If products $MS$ and $L R$ exist, then $(M \otimes L)(S \otimes R) = M S \otimes L R$.
\end{enumerate}

\subsection{Graph products}

Following the standard definitions, a \textit{product} of graphs $G = (V, E)$ and $H = (U, W)$ is a graph with a vertex set $V \times U$ such that the adjacency of vertices $(v_1, u_1)$ and $(v_2,u_2)$ is  defined on the base of  the edge sets  $E$ and $W$. Specializing the adjacency of vertices, we obtain a variety of graph products.

Let us define the most common graph products. In these definitions we assume that graphs $G = (V, E)$ and $H = (U, W)$  have adjacency matrices $M$ and $L$ respectively.

The \textit{tensor product $G \times H$} is a graph such that its vertices  $(v_1, u_1)$ and $(v_2,u_2)$ are adjacent if  and only if $v_1 v_2 \in E$ and $u_1 u_2 \in W$.   
The adjacency matrix of the tensor product $G \times H$ is 
$$M(G \times H) = M \otimes L.$$

The \textit{Cartesian product $G \Box H$} is a graph such that its vertices  $(v_1, u_1)$ and $(v_2,u_2)$ are adjacent if and only if $v_1 = v_2$ and $u_1 u_2 \in W$ or $u_1 = u_2$ and $v_1 v_2 \in E$.
The adjacency matrix of the Cartesian product $G \Box H$ is 
$$M(G \Box H) = M \otimes I + I \otimes L.$$

The \textit{normal product $G \boxtimes H$} is a graph such that its vertices  $(v_1, u_1)$ and $(v_2,u_2)$ are adjacent if and only if $v_1 = v_2$ and $u_1 u_2 \in W$, or $u_1 = u_2$ and $v_1 v_2 \in E$ or $v_1 v_2 \in E$ and $u_1 u_2 \in W$. 
The adjacency matrix of the normal product $G \boxtimes H$ is 
$$M(G \boxtimes H) = M \otimes I + I \otimes L + M \otimes L.$$

The \textit{lexicographic product $G \cdot H$} is a graph such that its vertices  $(v_1, u_1)$ and $(v_2,u_2)$ are adjacent if and only if $v_1 v_2 \in E$ or $v_1 = v_2$ and $u_1 u_2 \in W$.   
The adjacency matrix of the lexicographic product $G \cdot H$ is 
$$M(G \cdot H) = M \otimes J + I \otimes L.$$

\section{Basics on perfect structures} \label{perfsec}

A \textit{perfect structure} is a triple of complex matrices $(M,P,S)$ satisfying the equation
$$MP = PS.$$

Given this,
\begin{itemize}
\item the square matrix $M$ of order $n$ is diagonalizable and is the \textit{adjacency matrix} of a graph;
\item the rectangular $n \times k$ matrix $P$ with $k \leq n$ is called the \textit{structure matrix};
\item the square matrix $S$ of order $k$ is called the \textit{parameter matrix}.
\end{itemize}

Equivalently, we will say that $P$ is the perfect structure with parameters $S$ in a graph defined by the adjacency matrix $M$.

Some of our future results require an additional condition on the structure matrix $P$.  We define a class of \textit{nonsingular} perfect structures to be the set of all perfect structures with the structure matrix $P$ of full rank.

Let us consider the key examples of perfect structures.

Our first example is pairs of similar matrices. As is well known, matrices $A$ and $B$ are said to be \textit{similar} if there exists a nonsingular matrix $C$ such that $A = CBC^{-1}$ which is equivalent to $AC = CB$. Thus we have that similar matrices $A$ and $B$ are the nonsingular perfect structure $(A,C,B)$ with the structure matrix $C$.

Another example of perfect structures is eigenvectors of matrices (or eigenfunctions of graphs). Recall that an \textit{eigenvector} of a complex matrix $M$ is a nonzero vector $f$ such that $Mf = \lambda f$, where $\lambda \in \mathbb{C}$ is called by an \textit{eigenvalue}. An \textit{eigenfunction} of a graph $G = (V,E)$ is a function $f: V \rightarrow \mathbb{C}$ that is an eigenvector of the adjacency matrix of $G$. It is easy to note that every eigenvector (or eigenfunction) $f$ corresponds to a perfect structure $(M,f,\lambda)$ with a scalar parameter matrix $\lambda$.

Let us turn to combinatorial examples of perfect structures.

 A \textit{perfect coloring in $k$ colors} of a graph $G = (V,E)$ on $n$ vertices is a surjective function $c: V \rightarrow \{ 1, \ldots, k\}$ such that if $c(u) = c(v)$ for some vertices $u,v \in V$, then multisets $\{ c(w)| uw \in E(G)\}$ and  $\{ c(w)| vw \in E(G)\}$ coincide. Given a perfect coloring $c$, let the $(0,1)$-matrix $P$ be an $n \times k$ matrix with entries $p_{u,j} = 1$ whenever $c(u) = j$. It is not hard to see that  the equality $MP = PS$ holds for the adjacency matrix $M$ of the graph $G$ and for an appropriate integer matrix $S$ called by the \textit{parameters of a perfect coloring}. More specifically, entries $s_{i,j}$ of the parameter matrix $S$ are equal to  the number of vertices of color $j$ in the neighborhood of any vertex of color $i$.
The set of perfect colorings of a graph $G$ with the adjacency matrix $M$ is exactly the set of all  nonsingular perfect structures $(M,P,S)$, where the structure matrix $P$ is a $(0,1)$-matrix with exactly one unity entry in each row.

Every perfect coloring in $k$ colors of a simple graph $G$ defines a partition of the vertex set of $G$ into $k$ parts such that the induced subgraph on the vertices of each part is regular and the edges of $G$ between vertices from different classes compose a biregular bipartite graph. In other words, each color class of a perfect coloring is a part of an equitable partition, and the parameter matrix of a perfect coloring is the quotient matrix of the partition.

From the point of view of  adjacency matrices, the existence of a perfect coloring  means that after an appropriate row and column permutation the adjacency matrix $M$ can be written in a block-diagonal form $M = \{B_{i,j}\}_{i,j=1}^k$, where $B_{i,j} = B_{j,i}^T$ are rectangular  blocks with equal row and column sums.

Let us introduce graph coverings that are another important special case of the perfect structures. A graph $G = (V,E)$ is said to \textit{cover} a graph $H = (U,W)$ if there exists a surjective function $\varphi : V \rightarrow U$ such that for each $v \in V$ the equality $\{\varphi(u)| (v,u) \in E  \} = \{ w | (w,\varphi(v)) \in W\}$ holds. It is not hard to see that a graph $G$ covers a graph $H$ if and only if there is a perfect coloring $(M,P,S)$ in which $M$ is the adjacency matrix of the graph $G$ and $S$ is the adjacency matrix of the graph $H$. 

At last, we mention one more natural generalization of perfect colorings. Let a \textit{fractional perfect coloring} of a graph $G$ with the adjacency matrix $M$ be a nonsingular perfect structure $(M,P,S)$ such that all entries of the matrix $P$ are nonnegative real numbers and all row sums of $P$ are equal to $1$. Every convex combination of perfect colorings in a graph with the same parameter matrix $S$ is a fractional perfect coloring. 

\subsection{Main properties of perfect structures} \label{perfprop}

\begin{svoistvo}[Linearity of the space of structure matrices]\label{sructspace}
\item Given the adjacency matrix $M$ and the parameter matrix $S$ of a perfect structure $(M,P,S)$, the set of the structure matrices $P$ is a linear space.
\end{svoistvo}

\begin{proof}
If  $(M, P, S)$ and  $(M, R, S)$ are perfect structures, then for any complex $\alpha$ and $\beta$ the triple $(M, \alpha P + \beta R, S)$ is a perfect structure, because 
$$M (\alpha P + \beta R) = \alpha MP + \beta MR = \alpha P S + \beta R S = (\alpha P + \beta R) S.$$
\end{proof}

\begin{svoistvo}[Synergies between adjacency and parameter matrices] \label{structgrsum}
\item
\begin{enumerate}
\item If $(M,P,S)$  is a perfect structure, then for all $\alpha \in \mathbb{C}$ the triple $(\alpha M, P, \alpha S)$ is a perfect structure.
\item If $(M, P, S)$ and $(L, P, T)$ are perfect structures and $M + L$ is a diagonalizable matrix, then $(M +  L, P,  S + T)$ is a perfect structure.
\item If  $(M,P,S)$ is a perfect structure, then for all  $k \in \mathbb{N}$ the triple $(M^k, P, S^k)$ is a perfect structure.
\item Given a polynomial $p(x) \in \mathbb{C}[x]$ and a perfect structure $(M,P,S)$, the triple $(p(M), P, p(S))$ is a perfect structure.
\end{enumerate}
\end{svoistvo}
\begin{proof}
\begin{enumerate}
\item $(\alpha M)P = \alpha (MP) = \alpha (PS) = P(\alpha S).$
\item $(M+ L)P = M P + L P = PS + PT = P(S + T).$
\item $M^{k} P = M^{k-1}( MP) =  M^{k-1} (PS) =\ldots = P S^{k-1} S = PS^{k}.$
\item This clause easily follows from the previous ones. 
\end{enumerate}
\end{proof}

\begin{svoistvo}[Kronecker product of perfect structures] \label{structtensorprod}
\item If triples $(M, P, S)$ and $(L, R, T)$ are perfect structures, then the triple $(M \otimes L, P \otimes R, S \otimes T)$ is a perfect structure.
\end{svoistvo}

\begin{proof}
Using the properties of the Kronecker product, we have
$$ (M \otimes L) (P \otimes R) = MP \otimes LR =  
PS \otimes R T = (P \otimes R)(S \otimes T). $$

\end{proof}

\begin{svoistvo}[Composition of perfect structures] \label{structcomp}
\item If $(M,P, S)$ and $(S, R, T)$ are (nonsingular) perfect structures, then the triple $(M, PR, T)$ is a (nonsingular) perfect structure.
\end{svoistvo}

\begin{proof}
Equalities $M P = P S$ and $S R = R T$ imply that 
$$M PR = P S R = P R T.$$
If perfect structures $(M,P, S)$ and $(S, R, T)$ are nonsingular, then the perfect structure $(M,PR,T)$ is nonsingular because of the inequality $rank(P R) \geq rank(P) + rank(R) - k$, where $k$ is the number of columns of $P$.
\end{proof}

Taking the composition of a perfect structure with an eigenvector of the parameter matrix, we obtain the following property.

\begin{svoistvo}[Mapping of eigenvectors] \label{eigenvlog}
\item If $f$ is an eigenvector corresponding to an eigenvalue $\lambda$ for the parameter matrix $S$ of a nonsingular perfect structure $(M,P,S)$, then $Pf$ is an eigenvector of the adjacency matrix  $M$  for the same eigenvalue  $\lambda$.
\end{svoistvo}

\begin{svoistvo}[Similarity of perfect structures] \label{conjstruct}
\item If $(M,P,S)$ is a (nonsingular) perfect structure, then for every nonsingular matrix $A$ of order $n$  and every nonsingular matrix $B$ of order $k$,  the triple $(M', P', S')$, where  $M' =A M A^{-1}$, $P' = A P B^{-1}$ and $S' = B S B^{-1}$,  is a (nonsingular) perfect structure.
\end{svoistvo}

\begin{proof}
Indeed,
$$(A M A^{-1})(A P B^{-1}) = A M P B^{-1} = A P S B^{-1}  = (A P B^{-1})(B S B^{-1}).$$
If the structure $(M,P,S)$ is nonsingular, then the matrix $P'$ has a full rank because matrices $A$ and $B^{-1}$ are nonsingular and $P$ has a  full rank.
\end{proof} 

We will say that perfect structures $(M,P,S)$ and $(M',P',S')$ are \textit{similar} if for some nonsingular matrices $A$ and $B$ we have $M' =A M A^{-1}$, $P' = A P B^{-1}$ and $S' = B S B^{-1}$.

\begin{svoistvo}[Jordan normal form of the parameter matrix] \label{symspecS}
\item If the triple $(M,P,S)$ is a nonsingular perfect structure then the following hold.
\begin{itemize}
\item  The parameter matrix $S$ is diagonalizable.
\item The spectrum of the parameter matrix $S$ is included in the spectrum of the adjacency matrix $M$ as a multiset.
\end{itemize}
\end{svoistvo}

\begin{proof}
Assume that the parameter matrix $S$ is not diagonalizable. Then there is a generalized eigenvector $f$ for some eigenvalue $\lambda$ such that $(S - \lambda I) f \neq 0 $ but $(S - \lambda I)^2 f = 0$.

Since the matrix $P$ has a full rank, we have $Pf \neq 0$.  Using Property~\ref{structgrsum}, we obtain
$$(M - \lambda I) Pf = P(S - \lambda I) f \neq 0, $$
because $(S -\lambda I )f \neq 0 $ and $P$ has full rank. On the other hand,
$$(M - \lambda I)^2 Pf = P(S - \lambda I)^2 f =  0,$$
since $(S- \lambda I)^2 f = 0$. Therefore, $Pf$ is a generalized eigenvector for the matrix $M$ that contradicts to the diagonalizability of this matrix. 

Let us prove now that the spectrum of the matrix $M$ includes the spectrum of $S$. Since the matrix $S$ is diagonalizable, there is a set $f_1, \ldots, f_k$ of its linearly independent eigenvectors corresponding to eigenvalues $\mu_1, \ldots, \mu_k$. Then the vectors $Pf_1, \ldots, Pf_k$ are also linearly independent because the matrix $P$ has a full rank. Property~\ref{eigenvlog} implies that vectors $Pf_1, \ldots, Pf_k$ are eigenvectors for the matrix $M$ corresponding to eigenvalues $\mu_1, \ldots, \mu_k$.
\end{proof}

Property~\ref{symspecS} allows us to put every perfect structure to a canonical form. The \textit{canonical form} of a perfect structure $(M, P, S)$ is a similar perfect structure $(M,R,T)$ with a diagonal parameter matrix $T$. The canonical form of a perfect structure is unique up to concurrent row and column permutations.

\begin{svoistvo}[Canonical form of a perfect structure] \label{canonstruct}
\item Every nonsingular perfect structure $(M,P,S)$ is congruent to the perfect structure $(M, R, T)$, where the matrix $T = \diag(\mu_1, \ldots, \mu_k)$. Moreover, the multiset $\{\mu_1, \ldots, \mu_k \}$ is the spectrum of the parameter matrix $S$, and columns $R_i$ of the matrix $R$  are the linearly independent eigenvectors of the matrix  $M$ such that  $MR_i = \mu_i R_i$.
\end{svoistvo}

\begin{proof}
By Properties~\ref{conjstruct} and~\ref{symspecS}, we have that the required perfect structure $(M,R,T)$ and the matrix $T$ exist. It only remains to note that the equality $MR =  RT$  is equivalent to equalities $MR_i = \mu_i R_i$ for all $i = 1, \ldots, k$.
\end{proof}

\begin{svoistvo}[Columns of the structure matrix] \label{columnspaceP}
\item If $(M,P,S)$ is a nonsingular perfect structure, then each column $P_i$ of the structure matrix $P$ belongs to the sum of eigenspaces of the adjacency  matrix $M$ coresponding to the eigenvalues of the parameter matrix  $S$.
\end{svoistvo}

\begin{proof}
By Property~\ref{canonstruct}, there is the canonical form $(M,R,T)$ of the perfect structure $(M,P,S)$  such that the columns of $R$ are the eigenvectors of $M$ corresponding to the eigenvalues of the parameter matrix $S$. It only remains to note that there is some nonsingular matrix $B$ for which $P = RB$, because the perfect structures $(M,R,T)$ and $(M,P,S)$ are similar.
\end{proof}

\begin{svoistvo}[Orthogonality of structure matrices]
\item If $(M,P,S)$ and $(M,R,T)$ are nonsingular perfect structures in a graph with a symmetric adjacency matrix $M$ and the spectra of the parameter matrices $S$ and $T$ do not intersect, then each column $P_i$ of the structure matrix $P$ is orthogonal to any column $R_j$ of the structure matrix $R$. 
\end{svoistvo}

\begin{proof}
By Property~\ref{columnspaceP}, columns $P_i$ and $R_j$ belong to sums of eigenspaces corresponding to nonintersected subsets of eigenvalues of $M$. Since $M$ is a symmetric matrix,  eigenspaces corresponding to different eigenvalues are orthogonal. Thus the invariant subspaces of $M$ containing vectors $P_i$ and $R_j$ are also orthogonal. 
\end{proof}

\begin{svoistvo}[Dimension of the space of structure matrices]
\item
\begin{enumerate}
 \item Given diagonalizable matrices $M$ and $S$, there exists a nonsingular perfect structure $(M,P,S)$ with a structure matrix $P$ of a full rank if and only if the spectrum of the matrix $M$ includes the spectrum of the matrix $S$ as a multiset. 
 \item The linear dimension of the space of structure matrices $P$ for given diagonalizable matrices $M$ and $S$ is equal to $\sum\limits_{i} \nu_M (\lambda_i) \nu_S (\lambda_i)$, where $\nu_M(\lambda_i)$ is the multiplicity of the eigenvalue $\lambda_i$ for the adjacency matrix  $M$ and $\nu_S (\lambda_i)$  is the multiplicity of the same eigenvalue in the parameter matrix $S$.
 \end{enumerate}
\end{svoistvo}

\begin{proof}
1. A proof of the fact that inclusion of the spectrum of $S$ in the spectrum of the matrix $M$ is necessary for the existence of a nonsingular perfect structure $(M,P,S)$ was given in Property~\ref{symspecS}. To prove sufficiency, we put $P = RB$, where $B$ is a nonsingular matrix conjugating the matrix $S$ to the diagonal form, and $R$ is a rectangular matrix whose columns compose a set of linearly independent eigenvectors corresponding to the eigenvalues of $S$.   Such a matrix $R$ exists because the spectrum of $S$ is contained in the spectrum of $M$.

2. For a perfect structure $(M,P,S)$, let us consider the similar perfect structure $(J_M, R, J_S)$, where $J_M$ and $J_S$ are diagonal Jordan normal forms of the matrices $M$ and $S$ respectively. The dimension of the space of perfect structures $(M,P,S)$ coincides with the dimension of the space of structures $(J_M, R, J_S)$. The equation $J_M R = R J_S$ is equivalent to equalities $\lambda_i r_{i,j} = \mu_j r_{i,j}$ for all $i =1, \ldots, n$ and $j= 1, \ldots, k$. So entries $r_{i,j} = 0$ if $\lambda_i \neq \mu_j$ and $r_{i,j}$ is arbitrary otherwise. Therefore, the dimension of the space of structure matrices $R$ is equal to $\sum\limits_{i} \nu_M (\lambda_i) \nu_S (\lambda_i)$.
\end{proof}

\begin{svoistvo}[Existence and uniqueness of parameter matrices]
\item Given an adjacency matrix $M$ of order $n$ and an $n \times k$ structure matrix $P$, there exists a nonsingular perfect structure $(M,P,S)$ with the parameter matrix $S$ if and only if the linear span of the columns of $P$ is an invariant subspace of dimension $k$ for the matrix $M$. Moreover, under these conditions the parameter matrix $S$ and the perfect structure $(M,P,S)$ are unique.
\end{svoistvo}

\begin{proof}
We start with the proof of necessity. If the dimension of the linear span of columns of $P$ is less than $k$, then the matrix $P$ is not of full rank and, therefore, the structure $(M,P,S)$ cannot be nonsingular. Equality $MP = PS$ considered for columns means that the action of the matrix $M$ on any column of the matrix $P$ gives a vector from the linear span of its columns.

Let us prove sufficiency now. If the linear span of the columns of $P$ is an invariant subspace of the matrix $M$ of dimension $k$, then there exists a base set of eigenvectors $R_1, \ldots, R_k$ of $M$ corresponding to eigenvalues $\mu_1, \ldots, \mu_k$ such that  $P = RB$ for some nonsingular matrix $B$ and the matrix $R$ having $R_1, \ldots, R_k$ as columns. Then we put $S = B^{-1} \diag\{\mu_1, \ldots, \mu_k\}$.

If for given matrices $M$ and $P$ the parameter matrix $S$ is not unique, then there exists a nonzero matrix $T$ such that $PT = 0$. This equation corresponds to a set of homogeneous systems of linear equations on the columns of $P$ and has only zero solution if the matrix $P$ is of full rank. 
\end{proof}

\subsection{Perfect structures with adjacency matrices $I$ and $J$} \label{structIJ}

\begin{utv}[Identity adjacency matrix] \label{structI}
\item All (nonsingular) perfect structures in a graph with adjacency matrix $I$ are structures  $(I,P,I)$, where $P$ is an arbitrary $n \times k$ matrix (of full rank).
\end{utv}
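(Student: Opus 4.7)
The plan is to exploit the fact that when $M = I$ the defining equation $MP = PS$ collapses to $P = PS$, equivalently $P(S - I_k) = 0$. The whole content of the proposition is then to squeeze $S = I_k$ out of this identity using the nonsingularity of $P$, and separately to check that $(I, P, I)$ is always a perfect structure.

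For the forward direction I would assume $(I, P, S)$ is a nonsingular perfect structure, so $P$ is an $n \times k$ matrix of full column rank. Full column rank gives a left inverse $Q$ with $QP = I_k$ (one may take $Q = (P^\ast P)^{-1} P^\ast$), and multiplying $P = PS$ on the left by $Q$ yields $I_k = S$. As a sanity check in the language of the paper, one could equally well invoke Property~\ref{symspecS}: the spectrum of $S$ must sit inside the spectrum of $I$, which is $\{1, \ldots, 1\}$, and since that property also forces $S$ to be diagonalizable, a diagonalizable $k \times k$ matrix whose only eigenvalue is $1$ must be $I_k$.

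For the converse direction I would simply verify that for any $n \times k$ matrix $P$ the triple $(I, P, I)$ trivially satisfies $IP = P = PI$ and hence is a perfect structure; nonsingularity is then just the condition that $P$ has full rank, by definition of the term. I do not expect any real obstacle here: the whole proposition is essentially a one-line consequence of left-invertibility. The only subtle point worth flagging is that the nonsingularity hypothesis genuinely is needed for the forward implication, since, for example, $(I, 0, T)$ is a perfect structure for every $T$; this is why the parenthetical ``(nonsingular)'' and ``(of full rank)'' in the statement are linked.
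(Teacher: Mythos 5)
Your proposal is correct, and your primary argument is genuinely more elementary than the paper's. The paper derives $S = I_k$ by invoking its spectral machinery (Properties~\ref{symspecS} and~\ref{canonstruct}): the spectrum of $S$ must lie in that of $I$, so the only eigenvalue of $S$ is $1$, and since $S$ is forced to be diagonalizable it must equal $I_k$ — essentially the route you mention as a ``sanity check''. Your main route instead reads $MP = PS$ with $M = I$ as $P(S - I_k) = 0$ and kills $S - I_k$ with a left inverse $Q = (P^\ast P)^{-1}P^\ast$ (equivalently, the trivial kernel of a full-column-rank $P$); this avoids any appeal to diagonalizability of $S$ or to the canonical-form property, and is a cleaner one-line proof of this particular proposition, whereas the paper's version illustrates how the statement sits inside its general framework. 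Your converse verification ($IP = P = PI$) matches the paper's, and your remark that nonsingularity is genuinely needed for the forward implication (e.g.\ $(I, 0, T)$ is a perfect structure for every $T$) is a correct and worthwhile observation that the paper does not spell out.
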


\begin{proof}
Properties~\ref{symspecS} and~\ref{canonstruct} imply that the parameter  matrix $S$ of a perfect structure $(I,P,S)$ is similar to the identity matrix and so coincides with it.
Equality $IP = PI$ trivially holds for every matrix $P$. 
\end{proof}

From Propostion~\ref{structI} and Property~\ref{structgrsum}, we easily get the following property.

\begin{sled} \label{sdvignaI}
A triple $(M,P,S)$ is a perfect structure if and only if for any $\alpha \in \mathbb{C}$ the triple $(M + \alpha I, P , S + \alpha I)$ is a perfect structure.
\end{sled}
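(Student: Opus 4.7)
The plan is to chain together Proposition~\ref{structI} with the two relevant clauses of Property~\ref{structgrsum}. The forward direction carries all the content; the converse falls out by specializing the universal quantifier at $\alpha = 0$.

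For the forward direction, I would assume $(M,P,S)$ is a perfect structure and fix an arbitrary $\alpha \in \mathbb{C}$. Proposition~\ref{structI} immediately gives that $(I, P, I)$ is a perfect structure for this same $P$. Applying clause~1 of Property~\ref{structgrsum} to that triple with scaling factor $\alpha$ produces the perfect structure $(\alpha I, P, \alpha I)$. I would then apply clause~2 of Property~\ref{structgrsum} to the pair $(M,P,S)$ and $(\alpha I, P, \alpha I)$, which share the structure matrix $P$, to obtain the perfect structure $(M + \alpha I, P, S + \alpha I)$. The hypothesis of clause~2 demands that $M + \alpha I$ be diagonalizable; this is the only side-condition to verify, and it is immediate, since $M = Q D Q^{-1}$ with $D$ diagonal implies $M + \alpha I = Q(D + \alpha I) Q^{-1}$, still diagonal in the same basis.

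For the converse, I would simply specialize: if $(M + \alpha I, P, S + \alpha I)$ is a perfect structure for every $\alpha \in \mathbb{C}$, then setting $\alpha = 0$ yields that $(M, P, S)$ itself is a perfect structure.

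I do not expect any genuine obstacle. Beyond quoting the two cited results, the only extra observation is that adding a scalar multiple of $I$ preserves diagonalizability, which is a one-line remark. The corollary is essentially a bookkeeping consequence of Proposition~\ref{structI} and Property~\ref{structgrsum}.
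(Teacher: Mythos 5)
Your proof is correct and follows exactly the route the paper intends: the paper derives this corollary directly from Proposition~\ref{structI} and Property~\ref{structgrsum}, which is precisely your chain $(I,P,I) \mapsto (\alpha I, P, \alpha I) \mapsto (M+\alpha I, P, S+\alpha I)$, with the converse by taking $\alpha = 0$. The diagonalizability check for $M + \alpha I$ is the right side-condition to note and your one-line argument for it is fine.
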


Let us characterize all perfect structures in graphs with the unity adjacency matrix now.

\begin{utv}[Unity adjacency matrix] \label{completegrstruct}
\item A triple $(J,P,S) $ with the  matrix $J$ of order $n$ and the matrix $S$ of order $k$ is a nonsingular perfect structure if and only if one of the following possibilities holds.
\begin{itemize}
\item $S = \mathbb{0}$ (the zero matrix) and $P$ is an arbitrary full rank matrix with zero column sums;
\item  $S$ is a matrix of rank $1$, entries $s_{i,j}$ of $S$ are equal to $n v_i u_j$ for some $v_i, u_j \in \mathbb{C}$, $i ,j= 1, \ldots, k$.  $P$ is a full rank matrix with the sum of entries in the $j$th column equal to $n u_j \sum\limits_{t=1}^k p_{i,t}v_t$ for all $i = 1, \ldots, n$.
\end{itemize}
\end{utv}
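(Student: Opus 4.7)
The plan is to exploit the rank-one factorization $J = \mathbb{1}\mathbb{1}^T$, where $\mathbb{1} \in \mathbb{C}^n$ is the all-ones column vector. This immediately gives $JP = \mathbb{1}(\mathbb{1}^T P) = \mathbb{1} c^T$, with $c := P^T\mathbb{1}$ the column-sum vector of $P$, so $JP$ has rank at most one. Since $JP = PS$ and $\mathrm{rank}(P) = k$ (nonsingularity), Sylvester's inequality yields $\mathrm{rank}(S) = \mathrm{rank}(PS) \leq 1$. This produces exactly the dichotomy in the statement.

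For the forward direction I would argue as follows. If $S = \mathbb{0}$, then $\mathbb{1} c^T = JP = P\cdot\mathbb{0} = \mathbb{0}$ forces $c = 0$, which is the first case. Otherwise $\mathrm{rank}(S)=1$, so write $S = nvu^T$ for suitable nonzero $v,u\in\mathbb{C}^k$ (the factor $n$ is pulled out for notational cleanness). Then $PS = n(Pv)u^T$, and the equality $\mathbb{1} c^T = n(Pv)u^T$ of two rank-one matrices sharing the nonzero right factor $u^T$ forces $Pv = \alpha\mathbb{1}$ for some $\alpha\in\mathbb{C}$ and $c = n\alpha u$. Unwrapping, $(Pv)_i = \alpha$ is independent of $i$, and the $j$-th column sum of $P$ equals $n u_j \alpha = n u_j \sum_{t=1}^k p_{i,t} v_t$, which is precisely the condition stated.

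The converse in each case is a line of substitution: $JP = \mathbb{1} c^T$ agrees with $PS$ because the common value is $\mathbb{0}$ in the first case and $n\alpha\mathbb{1} u^T$ in the second.

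The only point requiring care, and the one I expect to be the main subtlety, is compatibility with Property~\ref{symspecS}, which insists that the parameter matrix of a nonsingular perfect structure be diagonalizable: a rank-one matrix $S = nvu^T$ is diagonalizable iff $v^T u \neq 0$. This is automatic here: applying $\mathbb{1}^T$ to $Pv = \alpha\mathbb{1}$ gives $c^T v = n\alpha$, while $c = n\alpha u$ gives $c^T v = n\alpha(u^T v)$, so either $\alpha = 0$ (in which case $Pv = 0$ contradicts $\mathrm{rank}(P)=k$ since $v\neq 0$) or $u^T v = 1$. Thus the listed conditions on $P$ silently enforce both $u^T v = 1$ and the diagonalizability of $S$, which is consistent with Property~\ref{symspecS} and the spectrum $\{n,0,\dots,0\}$ of $J$; no separate case needs to be excluded.
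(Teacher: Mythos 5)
Your proof is correct, but it reaches the key structural fact about $S$ by a genuinely different route than the paper. The paper's own proof leans on Property~\ref{symspecS}: since $(J,P,S)$ is nonsingular, $S$ is diagonalizable and its spectrum is contained in the spectrum $\{n, 0^{[n-1]}\}$ of $J$, so either $S=\mathbb{0}$ or $S$ is a diagonalizable rank-one matrix with eigenvalue $n$ (whence $s_{i,j}=nv_iu_j$), and the column-sum condition is then read off from $JP=PS$ exactly as you do. You bypass Property~\ref{symspecS} entirely: from $J=\mathbb{1}\mathbb{1}^T$ you get $JP=\mathbb{1}c^T$ of rank at most one, and full column rank of $P$ (Sylvester, or simply injectivity of $P$ on $\mathbb{C}^k$) gives $\mathrm{rank}(S)=\mathrm{rank}(PS)\le 1$; comparing the two rank-one expressions then yields $Pv=\alpha\mathbb{1}$ and $c=n\alpha u$. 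This is more elementary and self-contained, and your closing observation is a real bonus: the second bullet of the proposition, read literally, allows an arbitrary rank-one $S$, and you show the column-sum condition on $P$ silently forces $u^Tv=1$, i.e.\ that $S$ is diagonalizable with spectrum $\{n,0^{[k-1]}\}$ --- a fact the paper's spectral argument gets for free in the ``only if'' direction but never checks for the ``if'' direction. One small wording slip: the matrices $\mathbb{1}c^T$ and $n(Pv)u^T$ do not ``share the right factor $u^T$''; the correct (and immediate) justification is that the right-hand side is nonzero (since $Pv\neq 0$ and $u\neq 0$), so the column spaces of the two equal matrices coincide, forcing $Pv$ collinear with $\mathbb{1}$ --- or just multiply both sides on the right by a vector $w$ with $u^Tw=1$. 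This does not affect correctness.
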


\begin{proof}
The spectrum of the matrix $J$ of order $n$ consists of eigenvalue $n$ of multiplicity $1$ and eigenvalue $0$ of multiplicity $n-1$. By Property~\ref{symspecS}, the spectrum of the matrix $S$ is a subset of the spectrum of $M$, therefore it either contains the only eigenvalue $0$ of multiplicity $k$ or consists of eigenvalue $n$ of multiplicity $1$ and eigenvalue $0$ of multiplicity $k-1$. The first case gives us the matrix $S$ composed by zeroes, and the equality $JP = P \mathbb{0}$ means that all columns sums of $P$ are zero.
 
 If the spectrum of the matrix $S$ contains the eigenvalue $n$, then entries $s_{i,j}$ of the matrix $S$ can be presented as $n v_i u_j$ for appropriate complex numbers $v_i$ and $u_j$. The equality $JP = PS$ gives that the sum of entries in $j$th column of  every structure matrix $P$ is $n u_j \sum\limits_{t=1}^k p_{i,t}v_t$ for all $i = 1, \ldots, n$.
\end{proof}

Let us obtain some corollaries of Propositions~\ref{structI} and~\ref{completegrstruct}.
Firstly, note that the adjacency matrix of the complete graph $K_n$ differs from the matrix $J$ by the identity matrix $I$. Therefore, Corollary~\ref{sdvignaI} allows us not to distinguish these two matrices and to describe all perfect colorings of complete graphs.

\begin{sled} \label{colorinJ}
Let  $P$ correspond to an arbitrary coloring in $k$ colors of vertices of the complete graph  and assume that the number of vertices of color $i$ equals $n_i$. Then $P$ is a perfect coloring of the complete graph with the parameter matrix $J \cdot \diag(n_1, \ldots, n_k) - I$.
\end{sled}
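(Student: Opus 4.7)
The plan is to reduce the claim to checking a single equation by appealing to Corollary~\ref{sdvignaI}. The adjacency matrix of $K_n$ is $J - I$, so by Corollary~\ref{sdvignaI} with $\alpha = -1$, the triple $(J - I,\, P,\, J \cdot \diag(n_1,\ldots,n_k) - I)$ is a perfect structure if and only if the triple $(J,\, P,\, J \cdot \diag(n_1,\ldots,n_k))$ is. Thus the whole task boils down to verifying the matrix identity $JP = P \cdot J \cdot \diag(n_1,\ldots,n_k)$ and noting that $P$ has full rank.

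For the identity, I would compute both sides entry by entry. The $(v,j)$-entry of $JP$ is the sum of the $j$th column of $P$, which equals the number $n_j$ of vertices of color $j$; so every row of $JP$ is the vector $(n_1,\ldots,n_k)$. On the other side, the matrix $J \cdot \diag(n_1,\ldots,n_k)$ has constant $j$th column equal to $n_j$, and since $P$ is a coloring matrix each of its rows sums to $1$, giving $(P \cdot J \cdot \diag(n_1,\ldots,n_k))_{v,j} = n_j \sum_i p_{v,i} = n_j$. The two sides therefore agree. Full rank of $P$ follows because its columns are nonzero indicator vectors with pairwise disjoint supports, hence linearly independent.

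As an alternative that is more in the spirit of the surrounding subsection, one could invoke Proposition~\ref{completegrstruct} directly: the parameter matrix $S = J \cdot \diag(n_1,\ldots,n_k)$ has rank $1$ with $s_{i,j} = n_j$, which matches the form $s_{i,j} = n v_i u_j$ upon setting $v_i = 1/n$ and $u_j = n_j$; the required column-sum condition $\sum_v p_{v,j} = n u_j \sum_t p_{i,t} v_t$ then reduces to $n_j = n_j$, which holds trivially. I don't expect any serious obstacle here; the only thing to watch is keeping the two appearances of $J$ distinct (the $n \times n$ unity matrix in $J - I$ versus the $k \times k$ unity matrix inside the parameter matrix), and making sure the appeal to Corollary~\ref{sdvignaI} is the right one, so that the shift by $I$ is applied on both the adjacency and parameter sides simultaneously.
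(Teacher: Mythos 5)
Your proposal is correct and follows the paper's own route: the paper likewise invokes Corollary~\ref{sdvignaI} to pass from $K_n$ to the matrix $J$ and then notes that $(J, P, J\cdot\diag(n_1,\ldots,n_k))$ is verified directly to be a perfect structure, which is exactly the entrywise computation you spell out. Your alternative appeal to Proposition~\ref{completegrstruct} with $v_i = 1/n$, $u_j = n_j$ is also a valid consistency check, but it is not needed beyond the direct verification.
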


\begin{proof}
It can be verified directly that the triple $(J, P, J  \cdot \diag(n_1, \ldots, n_k)) $ is a perfect structure.
\end{proof}

The following corollary binds eigenfunctions in a regular graph and eigenvectors of the matrix $J$.

\begin{sled} \label{commonJ}
Every eigenfunction $f$ in a regular connected graph on $n$ vertices is also an eigenfunction in the complete graph on the same vertices and is an eigenvector of the matrix $J$. If $f$ is collinear to the vector $\mathbb{1}$ with all unity entries, then $f$ is the eigenfunction of $J$ corresponding to the eigenvalue $n$, otherwise it corresponds to the eigenvalue $0$.
\end{sled}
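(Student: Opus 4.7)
The plan is to split on whether $f$ is collinear to $\mathbb{1}$ and use two standard facts about a connected regular graph with adjacency matrix $M$ of degree $d$: the all-ones vector $\mathbb{1}$ is an eigenvector of $M$ with eigenvalue $d$, and connectedness makes this eigenvalue simple, so the $d$-eigenspace of $M$ is exactly $\mathbb{C}\mathbb{1}$. Since $M$ is real symmetric, eigenvectors for distinct eigenvalues are orthogonal with respect to the standard Hermitian inner product.

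If $f = c\mathbb{1}$ for some $c \in \mathbb{C}$, then a one-line computation gives $Jf = cn\mathbb{1} = nf$, so $f$ is an eigenvector of $J$ for the eigenvalue $n$. If instead $f$ is not collinear to $\mathbb{1}$ and $Mf = \lambda f$, then $\lambda \neq d$ (since the $d$-eigenspace is spanned by $\mathbb{1}$), and orthogonality yields $\sum_i f_i = \langle \mathbb{1}, f\rangle = 0$. Every entry of $Jf$ equals that coordinate sum, so $Jf = 0 \cdot f$, as required. In either case, since $M(K_n) = J - I$, the same vector $f$ is automatically an eigenfunction of the complete graph (with eigenvalue $n-1$ or $-1$ respectively).

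The only subtle ingredient is the simplicity of the eigenvalue $d$ for a connected regular graph; this is a standard consequence of Perron--Frobenius applied to $M$, or a short combinatorial argument: an eigenfunction for eigenvalue $d$ attains its maximum at some vertex, the eigenvalue equation forces all its neighbours to attain the same maximal value, and connectedness propagates the value to the entire vertex set. Everything else is immediate from symmetry of $M$, and one could alternatively package the case split via Proposition~\ref{completegrstruct} by observing that the spectrum of $J$ is $\{n, 0, \ldots, 0\}$ and asking into which eigenspace $f$ falls.
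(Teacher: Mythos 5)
Your proof is correct and takes essentially the same route as the paper's: both rest on $\mathbb{1}$ being an eigenvector of $M$ and of $J$, on connectedness (irreducibility/Perron--Frobenius) making the degree eigenvalue simple, and on symmetry forcing the remaining eigenvectors to be orthogonal to $\mathbb{1}$, hence annihilated by $J$. Your explicit treatment of the complete graph via $M(K_n)=J-I$ is a small elaboration the paper delegates to its earlier remark on Corollary~\ref{sdvignaI}, but it is not a different argument.
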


\begin{proof}
The vector $\mathbb{1}$  is an eigenvector as for the adjacency matrix of a regular graph corresponding to eigenvalue equal to the degree of the graph, as for the matrix $J$, where it corresponds to the eigenvalue $n$.  Because the adjacency matrices of connected graphs are symmetric and irreducible, all other eigenvectors $g$ are orthogonal to the vector $\mathbb{1}$ and, consequently, the sum of all entries of $g$ is $0$. Therefore, vectors $g$ are the eigenvectors of the matrix $J$ corresponding to the eigenvalue $0$.
\end{proof}

One can use the above results, for example, for finding the spectrum of the complement of a regular graph.

\begin{utv}
If $M$ is the adjacency matrix of a regular graph $G$ on $n$ vertices and of degree $r$ and $f$ is an eigenfunction of $G$, then $f$ is an eigenfunction in the complement graph $\overline{G}$ having the adjacency matrix $J - M - I$. The spectrum of the complement graph is
$$sp(\overline{G}) = \left\{ -\lambda -1| \lambda \in sp(G), \lambda \neq r  \right\} \cup  \left\{ n-r - 1 \right\}.$$
\end{utv}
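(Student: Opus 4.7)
The plan is to build the adjacency matrix $\overline{M} = J - M - I$ of the complement graph as a linear combination of $J$, $M$, and $I$, and to transfer the perfect structures associated with $M$ on each piece into a single perfect structure on $\overline{M}$ sharing a common structure matrix $f$.

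First, I fix an eigenfunction $f$ of $G$ with eigenvalue $\lambda$, so that $(M, f, \lambda)$ is a nonsingular perfect structure. By Corollary~\ref{commonJ}, $f$ is simultaneously an eigenvector of $J$: either $f$ is collinear with $\mathbb{1}$ (which forces $\lambda = r$, since $G$ is connected and regular) with $Jf = nf$, or $f$ is orthogonal to $\mathbb{1}$ with $Jf = 0$. Denote the corresponding $J$-eigenvalue by $\mu(f) \in \{0, n\}$, so that $(J, f, \mu(f))$ is also a perfect structure.

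Next, I would combine these structures using the results of Subsection~\ref{perfprop}. Applying Property~\ref{structgrsum}(1) with $\alpha = -1$ to $(M, f, \lambda)$ yields the perfect structure $(-M, f, -\lambda)$. Since $J - M$ is symmetric and hence diagonalizable, Property~\ref{structgrsum}(2) implies that $(J - M, f, \mu(f) - \lambda)$ is a perfect structure. Finally, Corollary~\ref{sdvignaI} with $\alpha = -1$ gives $(J - M - I, f, \mu(f) - \lambda - 1)$, so $f$ is an eigenfunction of $\overline{G}$ with eigenvalue $\mu(f) - \lambda - 1$.

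For the spectrum, I use that $M$ admits an orthogonal basis of eigenfunctions because $M$ is symmetric. Since $G$ is connected and regular, $\mathbb{1}$ spans the $r$-eigenspace of $M$ and contributes the eigenvalue $n - r - 1$ to $\overline{G}$. Each of the remaining basis eigenfunctions is orthogonal to $\mathbb{1}$, so $\mu(f) = 0$, and therefore contributes $-\lambda - 1$ for its associated eigenvalue $\lambda \neq r$ of $M$. Since these eigenfunctions form a basis of $\mathbb{C}^n$, they exhaust the spectrum of $\overline{M}$ with correct multiplicities, giving the claimed formula. The only mildly technical point is the diagonalizability hypothesis in Property~\ref{structgrsum}(2), which is automatic from symmetry of $J$ and $M$; no genuine obstacle is anticipated.
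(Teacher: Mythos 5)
Your proposal is correct and follows essentially the same route as the paper: it uses Corollary~\ref{commonJ} to view $f$ simultaneously as an eigenvector of $J$, treats $I$ via Proposition~\ref{structI} (equivalently Corollary~\ref{sdvignaI}), and combines the structures with Property~\ref{structgrsum}. You merely spell out the spectrum bookkeeping (the role of $\mathbb{1}$ and connectedness) more explicitly than the paper does, which is fine.
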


\begin{proof}
By Corollary~\ref{commonJ}, each eigenfunction $f$ of a regular graph is an eigenvector of the matrix $J$ corresponding to either the eigenvalue  $n$ (when $f$ is collinear to $\mathbb{1}$) or eigenvalue $0$ (otherwise). By Proposition~\ref{structI}, every vector is an eigenvector of the matrix $I$ with eigenvalue $1$. To complete the proof, we apply Property~\ref{structgrsum}.
\end{proof}

\section{Perfect structures in graph products} \label{prodsec}

We start the section with the following generalization of the graph product.
Given graphs with adjacency matrices $M_1, \ldots, M_m$ of the same order and graphs with adjacency matrices $L_1, \ldots, L_l$ of some other order, we define their  \textit{$( \alpha_{i,j} )$-product} to be the graph with the adjacency matrix
$$\sum\limits_{i=1}^m \sum\limits_{j=1}^l \alpha_{i,j} M_{i} \otimes L_{j}.$$

\subsection{Product of perfect structures} \label{prodpar}

The main result of this subsection is the following theorem on the product of perfect structures in the $(\alpha_{i,j})$-product of graphs. The theorem is an easy corollary of Properties~\ref{structgrsum} and~\ref{structtensorprod}.

\begin{teorema} \label{structgenprod}
For all  $\alpha_{i,j} \in R$ and collections of perfect structures $(M_1, P, S_1)$, $\ldots$ , $(M_{m}, P, S_{m})$ and $(L_1, R, T_1), \ldots , (L_{l}, R, T_{l})$ such that structure matrices $P$ and $R$ are the same within a collection, the triple of matrices
$$\left( \sum\limits_{i=1}^m \sum\limits_{j=1}^l \alpha_{i,j} M_{i} \otimes L_{j},~ P \otimes R,~\sum\limits_{i=1}^m \sum\limits_{j=1}^l\alpha_{i,j} S_{i} \otimes T_{j} \right)$$
is a perfect structure.
\end{teorema}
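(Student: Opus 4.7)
The plan is to assemble the identity by iterating the two cited properties. First, for each fixed pair $(i,j)$, apply Property~\ref{structtensorprod} to the perfect structures $(M_i, P, S_i)$ and $(L_j, R, T_j)$ to obtain that $(M_i \otimes L_j,\, P \otimes R,\, S_i \otimes T_j)$ is a perfect structure. This produces $ml$ perfect structures, all of which happen to share the same structure matrix $P \otimes R$, which is the decisive feature for what follows. Next, apply Property~\ref{structgrsum}(1) to rescale each of them by $\alpha_{i,j}$, giving
$$\bigl(\alpha_{i,j}\, M_i \otimes L_j,\; P \otimes R,\; \alpha_{i,j}\, S_i \otimes T_j\bigr).$$

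Because all $ml$ perfect structures obtained in the previous step have the same structure matrix $P \otimes R$, I would iterate Property~\ref{structgrsum}(2) over all pairs $(i,j)$ to collapse them into the single perfect structure
$$\left(\sum_{i=1}^{m}\sum_{j=1}^{l} \alpha_{i,j}\, M_i \otimes L_j,\; P \otimes R,\; \sum_{i=1}^{m}\sum_{j=1}^{l} \alpha_{i,j}\, S_i \otimes T_j\right),$$
which is exactly the claim of the theorem.

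The main obstacle is the hypothesis in Property~\ref{structgrsum}(2) that the sum of adjacency matrices be diagonalizable; this is not automatic from diagonalizability of the summands. In the present setting, however, the matrix $\sum_{i,j} \alpha_{i,j}\, M_i \otimes L_j$ is by construction the adjacency matrix of the $(\alpha_{i,j})$-product, and under the paper's standing convention adjacency matrices are taken diagonalizable. If one prefers not to lean on this convention, the equality $MP = PS$ can simply be verified directly by Kronecker arithmetic, invoking the identity $(M_i \otimes L_j)(P \otimes R) = (M_iP) \otimes (L_jR) = (PS_i) \otimes (RT_j) = (P \otimes R)(S_i \otimes T_j)$ termwise and then taking the $\alpha_{i,j}$-linear combination; diagonalizability of the resulting adjacency matrix is then the only remaining item, and it is a hypothesis on the input graph product rather than something requiring proof.
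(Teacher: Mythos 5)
Your proposal is correct and follows essentially the same route as the paper, which presents the theorem precisely as a corollary of Properties~\ref{structgrsum} and~\ref{structtensorprod} (Kronecker product of the structures, scaling, then summing structures sharing the matrix $P \otimes R$). Your remark on the diagonalizability hypothesis, resolved either by the paper's standing convention on adjacency matrices of $(\alpha_{i,j})$-products or by the direct termwise Kronecker computation, is a sensible and accurate filling-in of the only detail the paper leaves implicit.
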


In other words, Theorem~\ref{structgenprod} says that if $P$ and $R$ are perfect structures in graphs with adjacency matrices $M_1, \ldots, M_m$ and $L_1, \ldots, L_l$ respectively, then the tensor product $P \otimes R$ is a perfect structure in the $(\alpha_{i,j})$-product of these graphs with the parameter matrix equal to the $(\alpha_{i,j})$-product of parameter matrices.

It is not hard to specialize Theorem~\ref{structgenprod} for the tensor, Cartesian and normal products of graphs. In what follows we assume that $(M,P,S)$ and $(L,R,T)$ are perfect structures in graphs $G$ and $H$ with adjacency matrices $M$ and $L$ respectively and that all left (and right) terms in Kronecker products have the same order.

\begin{teorema} \label{structtensprod}
The matrix $P \otimes R$ is a perfect structure in the tensor product of graphs $G \times H$ with the parameter matrix $S \otimes T$.
\end{teorema}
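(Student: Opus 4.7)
The plan is to observe that this theorem is essentially an immediate specialization of the preceding general result. First I would recall from the definition of the tensor product in Section~\ref{intersec} that the adjacency matrix of $G \times H$ is exactly $M \otimes L$. Therefore, to establish the claim it suffices to verify that $(M \otimes L,\, P \otimes R,\, S \otimes T)$ is a perfect structure, i.e. that the identity $(M \otimes L)(P \otimes R) = (P \otimes R)(S \otimes T)$ holds and that $M \otimes L$ is diagonalizable.

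For the defining identity, I would apply Theorem~\ref{structgenprod} with $m = l = 1$ and $\alpha_{1,1} = 1$; this is literally the statement we need, so no new calculation is required. Equivalently, one may directly invoke Property~\ref{structtensorprod} (Kronecker product of perfect structures), whose proof reduces the equality to the mixed-product property $(M \otimes L)(P \otimes R) = MP \otimes LR = PS \otimes RT = (P \otimes R)(S \otimes T)$ of the Kronecker product.

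The only point deserving a remark is that the definition of a perfect structure requires the adjacency matrix to be diagonalizable. Since $(M,P,S)$ and $(L,R,T)$ are perfect structures, both $M$ and $L$ are diagonalizable by assumption. Choosing bases $\{x_i\}$ and $\{y_j\}$ of eigenvectors for $M$ and $L$ respectively, the vectors $x_i \otimes y_j$ form a basis of $\mathbb{C}^{nr}$ consisting of eigenvectors of $M \otimes L$, so $M \otimes L$ is diagonalizable as well. I expect no genuine obstacle here: the whole theorem is a direct corollary of Theorem~\ref{structgenprod}, and the only task is to tie its conclusion back to the formula $M(G \times H) = M \otimes L$ established in the preliminaries.
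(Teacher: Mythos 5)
Your proposal is correct and follows essentially the same route as the paper, which presents this theorem as an immediate specialization of Theorem~\ref{structgenprod} (equivalently, of Property~\ref{structtensorprod} via the mixed-product identity $(M \otimes L)(P \otimes R) = MP \otimes LR = PS \otimes RT = (P \otimes R)(S \otimes T)$), combined with the formula $M(G \times H) = M \otimes L$ from the preliminaries. Your extra check that $M \otimes L$ is diagonalizable (tensoring eigenvector bases) is a small point the paper leaves implicit, and it is a welcome addition rather than a deviation.
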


\begin{teorema} \label{structdecprod}
The matrix  $ P \otimes R$ is a perfect structure in the Cartesian product of graphs $G \Box H$ with the parameter matrix $ I \otimes T + S \otimes I$.
\end{teorema}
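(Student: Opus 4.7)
The plan is to deduce this as a direct specialization of Theorem~\ref{structgenprod}. The adjacency matrix $M \otimes I + I \otimes L$ of $G \Box H$ is already of the $(\alpha_{i,j})$-product form, so all I have to do is supply the correct two-element collections of perfect structures and the correct scalars.

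First I would enlarge each single given structure into a pair by adjoining a trivial one. On the $G$-side, take $(M_1, P, S_1) = (M, P, S)$ together with $(M_2, P, S_2) = (I_n, P, I_k)$; on the $H$-side, take $(L_1, R, T_1) = (I_{n'}, R, I_{k'})$ together with $(L_2, R, T_2) = (L, R, T)$. Proposition~\ref{structI} guarantees that the trivial triples $(I, P, I)$ and $(I, R, I)$ are indeed perfect structures for arbitrary $P$ and $R$, so within each collection the common-structure-matrix hypothesis of Theorem~\ref{structgenprod} is satisfied.

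Next I would choose the coefficients $\alpha_{1,1} = 1$ (pairing $M$ with $I_{n'}$), $\alpha_{2,2} = 1$ (pairing $I_n$ with $L$), and $\alpha_{1,2} = \alpha_{2,1} = 0$. Substituting into the formula of Theorem~\ref{structgenprod}, the adjacency matrix becomes $M \otimes I_{n'} + I_n \otimes L$, which is $M(G \Box H)$; the structure matrix stays $P \otimes R$; and the parameter matrix becomes $S \otimes I_{k'} + I_k \otimes T$, which is exactly the claimed $I \otimes T + S \otimes I$ up to the order of summands.

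The only step that is not purely bookkeeping with the Kronecker identities is the diagonalizability hypothesis inherited from Property~\ref{structgrsum}(2): one needs $M \otimes I + I \otimes L$ to be diagonalizable. This is not a real obstacle, since $M \otimes I$ and $I \otimes L$ commute and each is diagonalizable separately — if $A^{-1} M A$ and $B^{-1} L B$ are diagonal, then $(A \otimes B)^{-1} (M \otimes I + I \otimes L)(A \otimes B)$ is diagonal too — so their sum is diagonalizable. With that verified, the theorem follows immediately.
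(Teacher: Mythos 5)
Your proof is correct and follows exactly the route the paper intends: specialize Theorem~\ref{structgenprod} with the trivial structures $(I,P,I)$ and $(I,R,I)$ from Proposition~\ref{structI}, coefficients $\alpha_{1,1}=\alpha_{2,2}=1$ and $\alpha_{1,2}=\alpha_{2,1}=0$, yielding the adjacency matrix $M \otimes I + I \otimes L$ and parameter matrix $S \otimes I + I \otimes T$. Your extra remark on the diagonalizability of $M \otimes I + I \otimes L$ (via the conjugation by $A \otimes B$) is a sound verification of a hypothesis the paper leaves implicit.
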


\begin{teorema} \label{structnormalprod}
The matrix  $P \otimes R$ is a perfect structure in the normal product of graphs $G \boxtimes H$ with the parameter matrix $ I \otimes T + S \otimes I + S \otimes T$.
\end{teorema}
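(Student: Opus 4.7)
The plan is to derive this statement as a direct specialization of the general product Theorem~\ref{structgenprod}. The adjacency matrix of $G \boxtimes H$ is
\[
M \otimes I + I \otimes L + M \otimes L,
\]
which fits the $(\alpha_{i,j})$-product template with $m = l = 2$, adjacency matrices $M_1 = M$, $M_2 = I$ on one side and $L_1 = I$, $L_2 = L$ on the other, and coefficients $\alpha_{1,1} = \alpha_{1,2} = \alpha_{2,2} = 1$, $\alpha_{2,1} = 0$. So my first step is simply to rewrite $M(G \boxtimes H)$ in this indexed form.

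Next, I need two collections of perfect structures sharing structure matrices $P$ and $R$. On the left I would take the given $(M, P, S)$ together with $(I, P, I)$; on the right, $(I, R, I)$ together with the given $(L, R, T)$. The two trivial structures $(I, P, I)$ and $(I, R, I)$ are available for any $P$ and $R$ by Proposition~\ref{structI}.

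Applying Theorem~\ref{structgenprod} to these data would then yield a perfect structure whose structure matrix is $P \otimes R$ and whose parameter matrix is
\[
1 \cdot S \otimes I + 1 \cdot S \otimes T + 0 \cdot I \otimes I + 1 \cdot I \otimes T = S \otimes I + S \otimes T + I \otimes T,
\]
which is precisely the claimed parameter matrix.

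The one subtle point I would need to address is diagonalizability of the adjacency matrix of $G \boxtimes H$, which is required by the definition of a perfect structure. I would handle this via the identity
\[
M \otimes I + I \otimes L + M \otimes L = (I + M) \otimes (I + L) - I,
\]
together with the facts that the Kronecker product of diagonalizable matrices is diagonalizable and that subtracting $I$ preserves diagonalizability. Thus the main obstacle is really only a bookkeeping check; once the correct $(\alpha_{i,j})$-form and the auxiliary identity structures are in place, the theorem falls out immediately from Theorem~\ref{structgenprod}.
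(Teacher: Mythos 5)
Your proposal is correct and follows exactly the route the paper intends: Theorem~\ref{structnormalprod} is presented there as an immediate specialization of Theorem~\ref{structgenprod}, obtained by pairing the given structures $(M,P,S)$ and $(L,R,T)$ with the trivial structures $(I,P,I)$ and $(I,R,I)$ from Proposition~\ref{structI} under precisely the coefficient pattern you describe. Your additional verification that $M \otimes I + I \otimes L + M \otimes L$ is diagonalizable, via the identity $(I+M)\otimes(I+L)-I$, is a small extra piece of care that the paper leaves implicit.
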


To apply Theorem~\ref{structgenprod} to the lexicographic product of graphs, we need an additional condition on graphs $G$ and $H$. 

\begin{teorema} \label{structlexprod}
If $(M, P, S)$ and $(L, R, T)$ are perfect structures in graphs $G$ and $H$ with adjacency matrices $M$ and $L$ and if $(J, R, T')$  is  a perfect structure with some parameter matrix $T'$, then the matrix $P \otimes R$ is a perfect structure in the lexicographic product of graphs $G \cdot H$ with the parameter matrix $S \otimes T' + I \otimes T$.
\end{teorema}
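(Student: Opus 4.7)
The plan is to derive the claim as a direct application of Theorem~\ref{structgenprod} after assembling the right collections of perfect structures on each side of the tensor product. Recall that the adjacency matrix of the lexicographic product is $M(G \cdot H) = M \otimes J + I \otimes L$, which fits the $(\alpha_{i,j})$-product template with coefficients $\alpha_{1,1} = \alpha_{2,2} = 1$ and the remaining coefficients zero, provided we take $\{M_1, M_2\} = \{M, I\}$ on the left and $\{L_1, L_2\} = \{J, L\}$ on the right.

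First, I would observe that the structure matrix $P$ must appear in a perfect structure for each left factor. For $M$ this is given as $(M,P,S)$, and for $I$ we invoke Proposition~\ref{structI} to get the free perfect structure $(I, P, I)$ with whatever $P$ we like. On the right, the structure matrix $R$ must be shared by $J$ and by $L$: the hypothesis supplies $(J,R,T')$ and $(L,R,T)$ precisely for this purpose. Thus the hypotheses of Theorem~\ref{structgenprod} are met, and we may simply plug in.

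Applying Theorem~\ref{structgenprod} with these data yields the perfect structure
\[
\Bigl(M \otimes J + I \otimes L,\; P \otimes R,\; S \otimes T' + I \otimes T\Bigr),
\]
which is exactly the stated claim, since $M \otimes J + I \otimes L = M(G \cdot H)$.

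There is essentially no obstacle here; the content of the theorem lies entirely in recognizing that the lexicographic adjacency matrix is built from the ``trivial'' identity factor $I$ on the $G$-side (whose compatibility with $P$ comes gratis from Proposition~\ref{structI}) together with the nontrivial factor $J$ on the $H$-side (whose compatibility with $R$ is \emph{not} automatic and therefore must be postulated as the extra hypothesis $(J,R,T')$). Once these two auxiliary perfect structures are in hand, the result is immediate from the generalized product theorem.
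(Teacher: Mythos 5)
Your proposal is correct and is exactly the paper's intended derivation: the paper states Theorem~\ref{structlexprod} as a specialization of Theorem~\ref{structgenprod}, using $M(G \cdot H) = M \otimes J + I \otimes L$ together with the auxiliary structures $(I,P,I)$ from Proposition~\ref{structI} and the hypothesized $(J,R,T')$, which is precisely your argument. The parameter matrix $S \otimes T' + I \otimes T$ falls out just as you compute it, so nothing is missing.
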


Another application of Theorem~\ref{structgenprod} is calculating the spectra of graphs.

\begin{teorema} \label{specprodgraph}
If for a collection of matrices $M_1, \ldots, M_{m}$  (and matrices $L_1$, $\ldots$, $L_{l}$) of order $n'$ (of order $n''$)  there exists a consolidated full set of eigenvectors, then the spectrum of the $(\alpha_{i,j})$-product of matrices $M_1, \ldots, M_{m}$ and $L_1, \ldots, L_{l}$ is equal to
$$sp \left( \sum\limits_{i=1}^m \sum\limits_{j=1}^l \alpha_{i,j} M_{i} \otimes L_{j}\right) =\left\{\sum\limits_{i=1}^m \sum\limits_{j=1}^l \alpha_{i,j} \mu^{i}_s \lambda^j_t |  s = 1, \ldots, n', t = 1, \ldots, n''  \right\} $$
as a multiset, where $\mu^i_s$ and $\lambda^j_t$ are eigenvalues of matrices $M_i$ and $L_j$ respectively.
\end{teorema}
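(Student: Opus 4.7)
The plan is to exhibit an explicit eigenbasis of the $(\alpha_{i,j})$-product matrix using the consolidated eigenbases of the two collections, and to read off the eigenvalues from it. Concretely, let $v_1, \ldots, v_{n'}$ be a common eigenbasis for $M_1, \ldots, M_m$ with $M_i v_s = \mu^i_s v_s$, and $w_1, \ldots, w_{n''}$ a common eigenbasis for $L_1, \ldots, L_l$ with $L_j w_t = \lambda^j_t w_t$. First I would observe that for each fixed $s$, the vector $v_s$ is simultaneously the structure matrix of the perfect structures $(M_i, v_s, \mu^i_s)$ for all $i = 1, \ldots, m$, and similarly $w_t$ is a common structure matrix for all $(L_j, w_t, \lambda^j_t)$.

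Next I would invoke Theorem~\ref{structgenprod} directly on these two collections. Its conclusion delivers, for every pair $(s,t)$, the perfect structure
$$\left(\sum_{i=1}^m \sum_{j=1}^l \alpha_{i,j} M_i \otimes L_j,\ v_s \otimes w_t,\ \sum_{i=1}^m \sum_{j=1}^l \alpha_{i,j}\, \mu^i_s \lambda^j_t\right),$$
whose parameter matrix is a scalar. Hence $v_s \otimes w_t$ is an eigenvector of the $(\alpha_{i,j})$-product matrix with eigenvalue $\sum_{i,j} \alpha_{i,j}\, \mu^i_s \lambda^j_t$, by Property~\ref{eigenvlog} (or by a direct one-line computation using the mixed-product rule for Kronecker products).

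Finally I would argue that these $n' n''$ eigenvectors form a basis of $\mathbb{C}^{n' n''}$: tensor products of bases are bases, which is the standard fact that $\{v_s \otimes w_t\}_{s,t}$ is linearly independent whenever $\{v_s\}$ and $\{w_t\}$ are. Since the total number $n' n''$ matches the dimension of the ambient space, we recover the full spectrum as a multiset, with the stated eigenvalues indexed by $(s,t)$.

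The only point requiring any care is the multiset claim, i.e.\ that multiplicities are correctly counted; but this is immediate from producing a genuine basis of eigenvectors rather than merely a spanning collection, so no real obstacle arises. The hypothesis of a consolidated full set of eigenvectors is what makes the whole argument go through, because it guarantees that the same $P = v_s$ (resp.\ $R = w_t$) works uniformly in $i$ (resp.\ in $j$), which is exactly what Theorem~\ref{structgenprod} demands.
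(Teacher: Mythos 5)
Your proposal is correct and follows essentially the same route as the paper: apply Theorem~\ref{structgenprod} to the common eigenvectors $v_s$ and $w_t$ to see that each $v_s \otimes w_t$ is an eigenvector of the $(\alpha_{i,j})$-product with eigenvalue $\sum_{i,j}\alpha_{i,j}\mu^i_s\lambda^j_t$, and then count the $n'n''$ resulting eigenvectors to obtain the whole spectrum as a multiset. Your explicit remark that $\{v_s \otimes w_t\}$ is linearly independent because tensor products of bases form a basis is just a slightly more detailed justification of the counting step the paper asserts.
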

\begin{proof}
Theorem~\ref{structgenprod} implies that the tensor product of each pair of eigenvectors of matrices $M_i$ and $L_j$ is an eigenvector in the $(\alpha_{i,j})$-product of matrices corresponding to the demanded eigenvalue. The number of eigenvectors is equal to $n'n''$, so the construction gives a full set of eigenvalues and all linearly independent eigenvectors of the $(\alpha_{i,j})$-product.
\end{proof}

Using this theorem and the fact that every vector is an eigenvector for the identity matrix $I$, we find spectra of the tensor, Cartesian and normal products of graphs. In the following theorems we assume that $f$ is an eigenfunction in a graph $G$ corresponding to an eigenvalue $\mu$ and $g$ is an eigenfunction in a graph $H$ corresponding to an eigenvalue $\lambda$.

\begin{teorema} \label{sobftensor}
Function $f \otimes g$  is an eigenfunction in the tensor product  of graphs $G \times H$ corresponding to the eigenvalue $\mu \lambda$. The spectrum of the tensor product is
$$sp(G \times H) = \{ \mu_i \lambda_j |  \mu_i \in sp(G), \lambda_j \in  sp(H) \}.$$
\end{teorema}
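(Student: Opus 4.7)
The plan is to treat this as a direct specialization of the general results just proved, namely Theorem~\ref{structgenprod} (or Theorem~\ref{structtensprod}) for the eigenfunction claim and Theorem~\ref{specprodgraph} for the spectrum claim. Both parts require only setting up the right scalars and reading off the conclusion.

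First, for the eigenfunction statement, I would recall from the discussion in Section~\ref{perfsec} that an eigenfunction $f$ of $G$ with eigenvalue $\mu$ corresponds to a (column-vector) perfect structure $(M,f,\mu)$, and similarly $(L,g,\lambda)$ is a perfect structure for $H$. Here the ``structure matrices'' $f$ and $g$ are $n\times 1$, and the parameter matrices are the scalars $\mu$ and $\lambda$. Applying Theorem~\ref{structtensprod} (or equivalently Theorem~\ref{structgenprod} with a single pair of summands and $\alpha_{1,1}=1$) yields that $(M\otimes L,\, f\otimes g,\, \mu\otimes\lambda)$ is a perfect structure. Since $\mu\otimes\lambda = \mu\lambda$ as a scalar, and $M\otimes L = M(G\times H)$ by definition of the tensor product, this is exactly the assertion that $f\otimes g$ is an eigenfunction of $G\times H$ with eigenvalue $\mu\lambda$.

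Next, for the spectrum statement, I would apply Theorem~\ref{specprodgraph} with $m=l=1$, $\alpha_{1,1}=1$, $M_1 = M$ and $L_1 = L$. The hypothesis on the existence of a consolidated full set of eigenvectors is trivially satisfied with a single matrix on each side, because $M$ and $L$ are diagonalizable by assumption on adjacency matrices. The theorem then gives
$$sp(M\otimes L) = \{\mu_s \lambda_t \mid s = 1,\ldots,n',\ t=1,\ldots,n''\}$$
as a multiset, which is precisely the claimed spectrum $sp(G\times H) = \{\mu_i\lambda_j \mid \mu_i \in sp(G),\ \lambda_j \in sp(H)\}$.

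Since the conclusion is essentially a one-line specialization of earlier theorems in both parts, there is no real obstacle here. The only thing to be careful about is making sure the multiplicities are counted correctly, i.e.\ that the $n'n''$ tensor products $f\otimes g$ of a basis of eigenvectors of $M$ and a basis of eigenvectors of $L$ are linearly independent, which is a standard fact about Kronecker products and is already implicitly used in the proof of Theorem~\ref{specprodgraph}.
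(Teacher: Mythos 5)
Your proposal is correct and matches the paper's approach: the paper presents Theorem~\ref{sobftensor} as an immediate specialization of Theorem~\ref{specprodgraph} (and of the product construction in Theorem~\ref{structgenprod}/\ref{structtensprod}), exactly as you do. The only minor remark is that the paper's appeal to the identity matrix $I$ having all vectors as eigenvectors is needed only for the Cartesian and normal products, so your observation that the hypothesis of Theorem~\ref{specprodgraph} is trivially satisfied with $m=l=1$ is the right simplification for the tensor case.
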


\begin{teorema} \label{sobfdecart}
Function $f \otimes g$  is an eigenfunction in the Cartesian product of graphs $G \Box H$ corresponding to the eigenvalue $\mu + \lambda$. The spectrum of the Cartesian product is
$$sp(G \Box H) = \{ \mu_i + \lambda_j | \mu_i \in sp(G), \lambda_j \in  sp(H) \}.$$
\end{teorema}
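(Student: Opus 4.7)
The plan is to recognize this as an immediate specialization of Theorem~\ref{specprodgraph}, since the Cartesian adjacency matrix $M \otimes I + I \otimes L$ has precisely the form of an $(\alpha_{i,j})$-product with two matrices on each side: take $M_1 = M$, $M_2 = I$, $L_1 = I$, $L_2 = L$, and coefficients $\alpha_{1,1} = \alpha_{2,2} = 0$, $\alpha_{1,2} = \alpha_{2,1} = 1$. A consolidated eigenbasis on each side exists trivially, because every eigenvector of $M$ (respectively, of $L$) is also an eigenvector of $I$ with eigenvalue~$1$.

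Alternatively, and in the spirit of the foregoing development, one can derive the eigenfunction claim directly from the structural properties already established. Since $f$ is an eigenfunction of $G$ with eigenvalue $\mu$, the triple $(M, f, \mu)$ is a perfect structure; by Proposition~\ref{structI}, $(I, g, 1)$ is also a perfect structure. Property~\ref{structtensorprod} (Kronecker product of perfect structures) then gives that $(M \otimes I,\, f \otimes g,\, \mu)$ is a perfect structure. Symmetrically, $(I \otimes L,\, f \otimes g,\, \lambda)$ is a perfect structure. Applying clause~2 of Property~\ref{structgrsum} yields
\[
(M \otimes I + I \otimes L,\ f \otimes g,\ \mu + \lambda),
\]
which is exactly the assertion that $f \otimes g$ is an eigenfunction of $G \Box H$ with eigenvalue $\mu + \lambda$.

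For the spectrum statement, choose eigenbases $\{f_1, \dots, f_{n'}\}$ of $M$ and $\{g_1, \dots, g_{n''}\}$ of $L$ (which exist by diagonalizability of the adjacency matrices). The $n'n''$ tensor products $f_i \otimes g_j$ are linearly independent and form a basis of $\mathbb{C}^{n'n''}$, and by the previous paragraph each is an eigenfunction of $G \Box H$ with eigenvalue $\mu_i + \lambda_j$. This accounts for all eigenvalues of $M \otimes I + I \otimes L$ as a multiset, proving the spectrum formula.

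There is no substantive obstacle; the only point to check is the hypothesis of clause~2 of Property~\ref{structgrsum}, namely that $M \otimes I + I \otimes L$ is diagonalizable. This follows at once because $M \otimes I$ and $I \otimes L$ commute and each is diagonalized by the tensor eigenbasis $\{f_i \otimes g_j\}$, so their sum is simultaneously diagonalized by the same basis.
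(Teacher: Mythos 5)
Your proposal is correct and takes essentially the same route as the paper, which obtains this statement as an immediate specialization of Theorem~\ref{specprodgraph} (resting on Theorem~\ref{structgenprod}, Properties~\ref{structgrsum} and~\ref{structtensorprod}, and the fact that every vector is an eigenvector of $I$), exactly the two ingredients you spell out. One small slip: with your choice $M_1=M$, $M_2=I$, $L_1=I$, $L_2=L$ the correct coefficients are $\alpha_{1,1}=\alpha_{2,2}=1$ and $\alpha_{1,2}=\alpha_{2,1}=0$ (as written, your coefficients would produce $M\otimes L+I\otimes I$ rather than $M\otimes I+I\otimes L$), but your subsequent direct derivation uses the right decomposition, so the argument stands.
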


\begin{teorema} \label{sobfnorm}
Function $f \otimes g$  is an eigenfunction in the normal product of graphs  $G \boxtimes H$ corresponding to the eigenvalue $\mu + \lambda + \mu \lambda$. The spectrum of the normal product is 
$$sp(G \boxtimes H) = \left\{\mu_i + \lambda_j + \mu_i\lambda_j | \mu_i \in sp(G), \lambda_j \in  sp(H)\right\}.$$
\end{teorema}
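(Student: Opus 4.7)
The plan is to treat this as a direct corollary of Theorem~\ref{specprodgraph}, applied to the decomposition $M(G \boxtimes H) = M \otimes I + I \otimes L + M \otimes L$ given in Section~\ref{intersec}. The first task is to verify the eigenvalue identity for $f \otimes g$; the second is to account for the full spectrum with multiplicities.

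For the eigenfunction claim I would apply property 4 of the Kronecker product term by term. Since $Mf = \mu f$ and $Lg = \lambda g$, one has $(M \otimes I)(f \otimes g) = \mu (f \otimes g)$, $(I \otimes L)(f \otimes g) = \lambda (f \otimes g)$, and $(M \otimes L)(f \otimes g) = \mu \lambda (f \otimes g)$. Summing gives $M(G \boxtimes H)(f \otimes g) = (\mu + \lambda + \mu \lambda)(f \otimes g)$, which is what is claimed. (This is of course also immediate from Theorem~\ref{structnormalprod} with scalar parameter matrices $\mu$ and $\lambda$.)

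For the spectrum, I would cast the normal product as the $(\alpha_{i,j})$-product of the collections $M_1 = M$, $M_2 = I$ and $L_1 = L$, $L_2 = I$ with $\alpha_{1,1} = \alpha_{1,2} = \alpha_{2,1} = 1$ and $\alpha_{2,2} = 0$. Because every vector is an eigenvector of $I$ (with eigenvalue $1$), any basis $f_1, \ldots, f_{n'}$ of eigenvectors of $M$ is a consolidated full set for $\{M, I\}$, and analogously for $L$. Then Theorem~\ref{specprodgraph} yields the spectrum $\{\mu_i \lambda_j + \mu_i \cdot 1 + 1 \cdot \lambda_j + 0 \mid i, j\} = \{\mu_i + \lambda_j + \mu_i \lambda_j\}$, with the correct total count $n' n''$ of eigenvalues.

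There is no substantive obstacle: once the Kronecker decomposition of the adjacency matrix of $G \boxtimes H$ is in hand, the argument is the same bookkeeping used for Theorems~\ref{sobftensor} and~\ref{sobfdecart}, and the only mild point to double-check is that the $n' n''$ tensor products $f_i \otimes g_j$ are linearly independent (which follows from diagonalizability of $M$ and $L$) so that no eigenvalues are lost.
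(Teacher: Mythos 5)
Your proposal is correct and is essentially the paper's own argument: the paper obtains Theorem~\ref{sobfnorm} exactly as an application of Theorem~\ref{specprodgraph} to the decomposition $M \otimes I + I \otimes L + M \otimes L$, using that every vector is an eigenvector of $I$ so that the eigenbases of $M$ and $L$ serve as the consolidated full sets. Your explicit term-by-term Kronecker computation and the linear-independence remark are just the details the paper leaves implicit.
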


In the case of the lexicographic product of $G$ and $H$, the application of Theorem~\ref{specprodgraph} requires that the adjacency matrix of the graph  $H$ and the unity matrix $J$ have a common full set of eigenvectors. By Corollary~\ref{commonJ}, to satisfy this condition it is sufficient to require the regularity of graph $H$.

For an $r$-regular graph $H$ on $n$ vertices and for an eigenvalue $\lambda$ of this graph, let  $\tilde{\lambda}$  denote the  eigenvalue  of the unity matrix $J$ corresponding to the same eigenvector. Note that $\tilde{\lambda} = n$ if $\lambda = r$ and $\tilde{\lambda} = 0$ otherwise.

\begin{teorema} \label{sobflexprod}
If $f$ and $g$ are eigenfunctions in a graph $G$ and in a regular graph $H$ corresponding to eigenvalues $\mu$ and $\lambda$ respectively, then $f \otimes g$  is an eigenfunction in the lexicographic product $G \cdot H$ corresponding to the eigenvalue $\mu \tilde{\lambda} + \lambda$. The spectrum of the lexicographic product is 
$$sp(G \cdot H) = \left\{\mu_i \tilde{\lambda_j} + \lambda_j | \mu_i \in sp(G), \lambda_j \in sp(H)\right\}.$$
\end{teorema}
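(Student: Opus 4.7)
The plan is to treat the lexicographic product as a special case of the $(\alpha_{i,j})$-product covered by Theorem~\ref{specprodgraph} and to use Corollary~\ref{commonJ} to supply the consolidated eigenbasis required on the $H$-side. Recall that the adjacency matrix of $G\cdot H$ is $M\otimes J + I\otimes L$, which is the $(\alpha_{i,j})$-product with $M_1=M$, $M_2=I$, $L_1=J$, $L_2=L$ and $\alpha_{1,1}=\alpha_{2,2}=1$ (the other coefficients being zero).

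First I would verify that $f\otimes g$ is an eigenfunction of $G\cdot H$ with eigenvalue $\mu\tilde\lambda+\lambda$ by a direct calculation using property (4) of the Kronecker product. Because $H$ is regular and $g$ is an eigenfunction of $H$ for $\lambda$, Corollary~\ref{commonJ} tells us that $g$ is also an eigenvector of $J$, with eigenvalue $\tilde\lambda$ (equal to $n$ if $\lambda=r$ and $0$ otherwise). Therefore
$$(M\otimes J + I\otimes L)(f\otimes g) = (Mf)\otimes(Jg)+ f\otimes(Lg) = \mu\tilde\lambda\,(f\otimes g)+\lambda\,(f\otimes g),$$
yielding the claimed eigenvalue.

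To get that this accounts for the entire spectrum of $G\cdot H$, I would invoke Theorem~\ref{specprodgraph}. Its hypothesis requires consolidated full sets of eigenvectors for $\{M,I\}$ and for $\{J,L\}$. The first is immediate, since every vector is an eigenvector of $I$, so any eigenbasis of $M$ works. The second is exactly the content of Corollary~\ref{commonJ}: the eigenvectors of the adjacency matrix of the regular graph $H$ form a common eigenbasis with the all-ones matrix $J$. Applying Theorem~\ref{specprodgraph} with these eigenbases then gives the multiset of eigenvalues of $G\cdot H$ as $\{\mu_i\tilde{\lambda_j}+\lambda_j\mid\mu_i\in sp(G),\ \lambda_j\in sp(H)\}$.

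The only subtlety I expect is the distinction, on the $H$-side, between eigenvectors collinear with $\mathbb{1}$ and those orthogonal to it, which is why the function $\lambda\mapsto\tilde\lambda$ is piecewise-defined; but this is handled entirely by Corollary~\ref{commonJ}, so no further work is needed. Everything else is a bookkeeping application of results already proven in the paper.
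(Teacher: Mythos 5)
Your proposal is correct and follows essentially the same route as the paper: the lexicographic product is written as the $(\alpha_{i,j})$-product $M \otimes J + I \otimes L$, Corollary~\ref{commonJ} supplies the common eigenbasis of $L$ and $J$ for the regular graph $H$, and Theorem~\ref{specprodgraph} then gives the full spectrum, with the direct Kronecker computation confirming the eigenvalue $\mu\tilde{\lambda}+\lambda$. The only caveat, which you share with the paper itself, is that Corollary~\ref{commonJ} is stated for \emph{connected} regular graphs, so connectedness of $H$ is implicitly used when asserting the common eigenbasis.
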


In conclusion, we note that Theorem~\ref{structgenprod} for perfect structures and Theorem~\ref{specprodgraph} on the graph spectra can be specialized for many other cases of $(\alpha_{i,j})$-product,  including, for example,  conormal or disjunctive product, modular product, double graph, etc. 

\subsection{The contraction of eigenfunctions in graph products} \label{projectpar}

The main result of this subsection is an inversion of Theorem~\ref{specprodgraph}  on spectrum and eigenfunctions in graph products with respect to one of the graphs in the product.

To avoid cumbersome notations but still consider a quite general case, we state the contraction theorem for $(\alpha_{i,j})$-products being a sum of two tensor products of matrices. For $(\alpha_{i,j})$-products consisting of more summands, the theorem and its proof are similar.

Let $(M_1, M_2)$ and $(L_1, L_2)$ be pairs of diagonalizable matrices such that the matrices within a pair have orders $m$ and $n$ respectively. Assume that for pairs $(M_1, M_2)$ and $(L_1, L_2)$ there exist  full sets $\{ f_i \}_{i=1}^{m}$  and $\{ g_j\}_{j=1}^{n}$ of pairwise orthogonal eigenvectors:
$$M_1 f_i = \mu'_i f_i;~~~ M_2 f_i = \mu''_i f_i; ~~~ L_1 g_j = \lambda'_j g_j; ~~~ L_2 g_j = \lambda''_j g_j.$$

Define the product of these matrices to be the matrix  $N = M_1 \otimes L_1 + M_2 \otimes L_2$ of order $mn$. Next, a vector $h$ of length $mn$ can be considered as a member of the tensor product of spaces $\mathbb{C}^m$ and $\mathbb{C}^n$, so we can write $h = \sum\limits_{i=1}^{m}\sum\limits_{j=1}^{n} h(i,j) e^m_i \otimes e^n_j$, where $e^m_i$ and $e^n_j$ are the standard basis vectors in these spaces. Given a function $g$ from the space $\mathbb{C}^n$, put $P^h_g = Hg$, where $H$ is the  $m \times n$ matrix with entries $h(i,j)$.

\begin{teorema}[Contraction theorem] \label{phiproekt}
Under the above conditions, assume that  the triples $(N, h, \nu)$, $(L_1, g, \lambda')$ and  $(L_2, g, \lambda'')$ are eigenfunctions and that the eigenvalue $\lambda''$ is not equal to zero. If the function $f = P^h_g$ is an eigenfunction  $(M_1,f, \mu')$, then the triple  $(M_2, f, \mu'')$ is also an eigenfunction with the eigenvalue $\mu'' = (\nu - \lambda' \mu')/\lambda''$.
\end{teorema}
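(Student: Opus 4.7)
The plan is to decompose the tensor $h$ along the common eigenbasis of $L_1$ and $L_2$ so that the eigenequation $Nh = \nu h$ decouples into a separate equation for each basis vector; then to identify $f = P^h_g$ with the $g$-component of $h$ and read off the second eigenvalue from the decoupled equation.

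Concretely, I expand $h = \sum_{j=1}^{n} a_j \otimes g_j$ using the shared eigenbasis $\{g_j\}$ of $L_1, L_2$, chosen so that $g = g_{j_0}$ for some index $j_0$; this is possible because $g$ lies in the joint eigenspace of $L_1$ and $L_2$. Applying $N = M_1 \otimes L_1 + M_2 \otimes L_2$ to this expansion and using $L_1 g_j = \lambda'_j g_j$, $L_2 g_j = \lambda''_j g_j$ gives $Nh = \sum_j (\lambda'_j M_1 a_j + \lambda''_j M_2 a_j) \otimes g_j$, which must equal $\nu \sum_j a_j \otimes g_j$. Linear independence of $\{g_j\}$ forces the coefficient identity $\lambda'_j M_1 a_j + \lambda''_j M_2 a_j = \nu a_j$ for every $j$. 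On the other hand, writing the matrix $H$ associated to $h$ as $H = \sum_j a_j g_j^{T}$ and invoking pairwise (bilinear) orthogonality of $\{g_j\}$, the product $f = Hg_{j_0}$ collapses to a nonzero scalar multiple of $a_{j_0}$. Hence the hypothesis $M_1 f = \mu' f$ forces $M_1 a_{j_0} = \mu' a_{j_0}$, and substituting this into the $j = j_0$ instance of the coupled equation (using $\lambda'' \neq 0$) gives $M_2 a_{j_0} = \mu'' a_{j_0}$ with $\mu'' = (\nu - \lambda' \mu')/\lambda''$, whence $M_2 f = \mu'' f$.

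The main obstacle I foresee is the extraction step: isolating $a_{j_0}$ from $Hg_{j_0}$ relies on the common eigenvectors of $L_1, L_2$ being orthogonal with respect to the bilinear form $(u,v) \mapsto u^{T} v$ that enters the definition of $Hg$, not merely with respect to the Hermitian inner product. This is automatic for real symmetric adjacency matrices (the principal case, via the spectral theorem) and is clearly what the paper intends by ``pairwise orthogonal'', but in the general complex setting it would deserve an explicit assumption. A minor bookkeeping point is that if $g$ belongs to a multidimensional joint eigenspace of $L_1$ and $L_2$, the basis $\{g_j\}$ must be chosen to contain $g$; this is always possible and does not affect the decoupling argument.
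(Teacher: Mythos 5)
Your proof is correct, and it takes a genuinely different route from the paper's. The paper first invokes its product theorem (Theorem~\ref{structgenprod}) to obtain the full eigenbasis $\{f_i \otimes g_j\}$ of $N$, writes $h = \sum_{(i,j)\in\mathcal{I}} \beta_{i,j}\, f_i \otimes g_j$ over the index set $\mathcal{I}$ of pairs with $\mu'_i\lambda'_j + \mu''_i\lambda''_j = \nu$, computes $f = Hg_1 = \|g_1\|^2 \sum_{(i,1)\in\mathcal{I}} \beta_{i,1} f_i$, and then argues that the hypothesis $M_1 f = \mu' f$ kills every term with $\mu'_i \neq \mu'$, so the surviving $f_i$ all satisfy $\mu''_i = (\nu-\lambda'\mu')/\lambda''$. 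You instead expand $h = \sum_j a_j \otimes g_j$ with vector coefficients over the $(L_1,L_2)$-eigenbasis alone, decouple $Nh = \nu h$ into $\lambda'_j M_1 a_j + \lambda''_j M_2 a_j = \nu a_j$, identify $f$ with a nonzero multiple of $a_{j_0}$, and solve the single equation at $j = j_0$. Your version buys something real: it never uses the common eigenvectors $\{f_i\}$ of $(M_1,M_2)$, so simultaneous diagonalizability of the left-hand factors is not needed for the contraction direction, and the argument extends verbatim to more summands; the paper's version, by contrast, makes the statement visibly the inverse of the product construction and keeps the eigenvalue bookkeeping explicit. Both arguments share the same delicate point at the extraction step: the orthogonality of $\{g_j\}$ must be with respect to the bilinear form $g_j^T g_{j'}$ entering $Hg$, which you flag explicitly (and which the paper glosses over by writing $g_j^T g_1$ and $\|g_1\|^2$ interchangeably); likewise your remark that the basis can be chosen to contain $g$ corresponds to the paper's ``without loss of generality $g = g_1$''. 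In the intended real symmetric setting both of these are harmless, so your proof stands.
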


\begin{proof}
Without loss of generality, we assume that eigenfunctions $f = f_1$ and $g = g_1$. Then eigenvalues $\mu' = \mu'_1$, $\lambda' = \lambda'_1$, and $\lambda'' = \lambda''_1$. Since $\{ f_i \}_{i=1}^{m}$  and $\{ g_j\}_{j=1}^{n}$ are the full sets of eigenvectors for the given matrices, Theorem~\ref{structgenprod} implies that the vectors $ f_i \otimes g_j $ form the full set of eigenvectors of the matrix $N$ and correspond to the eigenvalues $\mu'_i \lambda'_j + \mu''_i \lambda''_j $.

 Let $\mathcal{I}$ denote the set of all pairs $(i,j)$ such that $\mu'_i \lambda'_j + \mu''_i \lambda''_j  = \nu$. Since $h$ is an eigenvector of the matrix $N$ corresponding to the eigenvalue $\nu$, we have
$$h = \sum\limits_{(i,j)\in \mathcal{I}} \beta_{i,j} f_i \otimes g_j  $$
for some complex $\beta_{i,j}$.
Using the equivalence between the Knocker product $f_i \otimes g_j$ and the outer product $f_i g_j^T$ and the  decomposition of $h$ with respect to the standard basis, we write the matrix $H$ as $H =  \sum\limits_{(i,j)\in \mathcal{I}} \beta_{i,j} f_i  g_j^T. $

Because all vectors $\{ g_j\}_{j=1}^n$ are orthogonal, for $f = Hg_1$ we have
$$f = \left(  \sum\limits_{(i,j)\in \mathcal{I}} \beta_{i,j} f_i  g_j^T \right) g_1 =   \sum\limits_{(i,j)\in \mathcal{I}} \beta_{i,j} f_i  (g_j^T  g_1) = \sum\limits_{(i,1)\in \mathcal{I}} ||g_1||^2  \beta_{i,1}  f_i.$$
By the condition, the vector $f = f_1$ is an eigenvector of the matrix $M_1$ corresponding to the eigenvalue $\mu' = \mu'_1$, consequently all vectors $f_i$ in the last sum are eigenvectors of the matrix $M_1$ corresponding to the same eigenvalue $\mu' = \mu'_i$. 
Using the definition of the set $\mathcal{I}$ and equalities $\lambda' = \lambda'_1$ and $\lambda'' = \lambda''_1$, we obtain that the vector $f$ is a linear combination of eigenvectors of $M_2$ corresponding to the eigenvalue $\mu'' = (\nu - \lambda'\mu')/ \lambda''$. 
\end{proof}

We show the essence of Theorem~\ref{phiproekt}  on examples of the basic graph products.

Let $M$ and $L$ be the adjacency matrices of simple graphs $G = (V,E)$ and $H = (U,W)$. Since the matrices $M$ and $L$ are symmetric, each of them has a full set of pairwise orthogonal eigenvectors. Because the other matrices used in the tensor, Cartesian and normal products are the identity matrices $I$ of an appropriate order, Proposition~\ref{structI} guarantees that there exists the required common full set of pairwise orthogonal eigenvectors for collections of matrices. 

Given a function $h = h(u,v)$ defined on the product $V \times U$ and the function $g = g(u)$ on the vertex set $U$ of the graph $H$, we define the function $P^h_g(v) = \sum\limits_{u \in U} h(v,u) g(u)$ on the vertex set $V$ of $G$.

\begin{teorema} \label{phiproekttensor}
If  $(M \otimes L, h, \nu )$ and $(L, g, \lambda)$  are eigenfunctions in the tensor product $G \times H$ and in the graph $H$ respectively and the eigenvalue $\lambda$ is not equal to zero, then $(M, f, \mu)$ is an eigenfunction in the graph $G$, where $f = P^h_g$ and $\mu= \nu / \lambda$.
\end{teorema}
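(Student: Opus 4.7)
The plan is to deduce Theorem~\ref{phiproekttensor} as a direct specialization of the Contraction Theorem~\ref{phiproekt}, by writing the single tensor product $M \otimes L$ as a degenerate two-term sum. Concretely, I would set $M_1 = I$, $L_1 = \mathbb{0}$, $M_2 = M$, and $L_2 = L$, so that $N = M_1 \otimes L_1 + M_2 \otimes L_2 = M \otimes L$, and the framework of Theorem~\ref{phiproekt} applies to the data at hand.

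Next I would verify the hypotheses of Theorem~\ref{phiproekt}. Since $M$ and $L$ are symmetric, they admit full orthogonal eigenbases $\{f_i\}$ and $\{g_j\}$; these are automatically consolidated orthogonal eigenbases for the pairs $(M_1,M_2) = (I,M)$ and $(L_1,L_2) = (\mathbb{0},L)$, with $\mu'_i = 1$ for every $i$ and $\lambda'_j = 0$ for every $j$. By assumption $g$ is an eigenfunction of $L_2 = L$ with eigenvalue $\lambda'' = \lambda \neq 0$; it is trivially an eigenfunction of $L_1 = \mathbb{0}$ with eigenvalue $\lambda' = 0$. Likewise, whenever $f = P^h_g \neq 0$, it is an eigenfunction of $M_1 = I$ with eigenvalue $\mu' = 1$.

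Theorem~\ref{phiproekt} then yields that $f$ is an eigenfunction of $M_2 = M$ with eigenvalue
$$\mu = \mu'' = \frac{\nu - \lambda' \mu'}{\lambda''} = \frac{\nu - 0 \cdot 1}{\lambda} = \frac{\nu}{\lambda},$$
which is exactly the claim. Since this reduction amounts to a substitution, I do not expect any real obstacle; the only subtle point is the boundary case $f = 0$, in which $f$ fails to be an eigenfunction in the strict sense, and this matches the implicit convention in the paper that contraction statements address only the nontrivial case. If one preferred not to route through the general contraction theorem, the same argument could be carried out directly: expand $h$ in the orthogonal basis $\{f_i \otimes g_j\}$ of $M \otimes L$ supplied by Theorem~\ref{structgenprod}, restrict the index set to pairs with $\mu_i \lambda_j = \nu$, take $g = g_1$ without loss of generality, and observe that $P^h_g = H g_1$ retains only the $j=1$ terms by orthogonality — forcing $\mu_i = \nu/\lambda$ on every surviving summand.
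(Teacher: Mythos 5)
Your reduction is correct and is essentially the paper's own route: Theorem~\ref{phiproekttensor} is presented there as a direct specialization of the Contraction Theorem~\ref{phiproekt}, with the required common orthogonal eigenbases available because $M$ and $L$ are symmetric and every vector is an eigenvector of the auxiliary matrices (cf.\ Proposition~\ref{structI}). Your padding $N = I \otimes \mathbb{0} + M \otimes L$ merely makes the two-summand statement literally applicable (the paper would invoke the one-summand analogue), and your caveat about the degenerate case $f = P^h_g = 0$ correctly flags a nontriviality convention the paper also leaves implicit.
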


\begin{teorema} \label{phiproektdec}
If  $(M \otimes I + I \otimes L, h, \nu )$ and $(L, g, \lambda)$  are eigenfunctions in the Cartesian product $G \Box H$ and in the graph $H$ respectively, then $(M, f, \mu)$ is an eigenfunction in the graph $G$, where  $f = P^h_g$ and $\mu = \nu - \lambda$.
\end{teorema}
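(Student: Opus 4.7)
The plan is to derive Theorem~\ref{phiproektdec} as an immediate corollary of the Contraction Theorem~\ref{phiproekt}, taking advantage of the fact that the Cartesian adjacency matrix $M \otimes I + I \otimes L$ fits the template $M_1 \otimes L_1 + M_2 \otimes L_2$ in two symmetric ways. The naive choice $(M_1,L_1,M_2,L_2) = (M,I,I,L)$ would force $\lambda'' = \lambda$, so the nonvanishing hypothesis $\lambda'' \neq 0$ of Theorem~\ref{phiproekt} would impose a restriction that the statement of Theorem~\ref{phiproektdec} does not make. The trick is therefore to reverse the roles and set $M_1 = I$, $L_1 = L$, $M_2 = M$, $L_2 = I$, so that $\lambda'' = 1 \neq 0$ holds automatically.

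With this assignment I would first check that the setup of Theorem~\ref{phiproekt} is in force. Since $M$ and $L$ are symmetric adjacency matrices of simple graphs, each admits a full orthonormal eigenbasis, and any basis is also an eigenbasis of the identity; hence both pairs $(M_1,M_2) = (I,M)$ and $(L_1,L_2) = (L,I)$ possess the required consolidated full set of pairwise orthogonal eigenvectors. Relative to the given eigenfunction $g$ of $L$ at $\lambda$, the relevant eigenvalues are $\lambda' = \lambda$ (for $L_1 = L$) and $\lambda'' = 1$ (for $L_2 = I$). Moreover, any nonzero vector is an eigenfunction of $M_1 = I$ with eigenvalue $1$; in particular, $f = P^h_g$ (assumed nonzero, else there is nothing to prove) is trivially an eigenfunction of $M_1$ with $\mu' = 1$.

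Applying Theorem~\ref{phiproekt} then yields that $f$ is an eigenfunction of $M_2 = M$ with eigenvalue
$$\mu'' = \frac{\nu - \lambda' \mu'}{\lambda''} = \frac{\nu - \lambda \cdot 1}{1} = \nu - \lambda,$$
which is exactly the claimed value $\mu$. Since all the heavy lifting is already encapsulated in Theorem~\ref{phiproekt}, the only conceptual point is the role-reversal in the tensor-product decomposition, which eliminates the nonvanishing eigenvalue condition that was unavoidable in the tensor-product version (Theorem~\ref{phiproekttensor}); beyond that, no step poses a genuine obstacle.
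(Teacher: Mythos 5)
Your proposal is correct and is essentially the paper's own (implicit) argument: the paper states Theorem~\ref{phiproektdec} as a specialization of the contraction Theorem~\ref{phiproekt}, with the identity factors of the Cartesian product supplying the common orthogonal eigenbasis via Proposition~\ref{structI}, exactly as in your role assignment $M_1 = I$, $L_1 = L$, $M_2 = M$, $L_2 = I$, which makes $\lambda'' = 1 \neq 0$ and $\mu' = 1$ automatic and yields $\mu = \nu - \lambda$. Your side remark that $f = P^h_g$ must be nonzero is consistent with the paper, since the hypothesis of Theorem~\ref{phiproekt} that $(M_1, f, \mu')$ is an eigenfunction already forces $f \neq 0$.
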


\begin{teorema} \label{phiproektnorm}
If  $(M \otimes I + I \otimes L + M \otimes L, h, \nu )$ and $(L, g, \lambda)$ are eigenfunctions in the normal product $G \boxtimes H$ and in the graph $H$ and the eigenvalue $\lambda$ is not equal to  $-1$, then $(M, f, \mu)$ is an eigenfunction in the graph $G$,  where  $f = P^h_g$ and $\mu = \frac{\nu - \lambda}{1 + \lambda}$.
\end{teorema}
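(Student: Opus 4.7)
The plan is to reduce to the contraction theorem (Theorem~\ref{phiproekt}) by regrouping the normal-product adjacency matrix as a sum of exactly two tensor products. The key algebraic observation is the identity
\[
M \otimes I + I \otimes L + M \otimes L \;=\; I \otimes L + M \otimes (I + L),
\]
which invites the identification $M_1 = I$, $M_2 = M$, $L_1 = L$, $L_2 = I + L$, so that the normal-product adjacency matrix takes the form $N = M_1 \otimes L_1 + M_2 \otimes L_2$ required by Theorem~\ref{phiproekt}.

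Next I would verify that the preconditions of the contraction theorem are satisfied. Since $M$ and $L$ are symmetric, they admit full sets of pairwise orthogonal eigenvectors. By Proposition~\ref{structI} every vector is an eigenvector of $I$, so any orthogonal eigenbasis of $M$ serves as a common eigenbasis for the pair $(M_1, M_2) = (I, M)$; analogously, $Lg = \lambda g$ forces $(I+L)g = (1+\lambda)g$, so an eigenbasis of $L$ is simultaneously an eigenbasis of $(L_1, L_2) = (L, I+L)$. In the notation of Theorem~\ref{phiproekt}, the given eigenfunction $g$ therefore yields $\lambda' = \lambda$ and $\lambda'' = 1 + \lambda$, and the standing hypothesis $\lambda \neq -1$ translates exactly into the non-degeneracy condition $\lambda'' \neq 0$.

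Finally, the candidate contraction $f = P^h_g$ is automatically an eigenfunction of $M_1 = I$ with eigenvalue $\mu' = 1$, so the triple $(M_1, f, \mu')$ required as input to Theorem~\ref{phiproekt} is supplied for free. Its conclusion then produces the eigenfunction $(M_2, f, \mu'') = (M, f, \mu)$, with
\[
\mu \;=\; \mu'' \;=\; \frac{\nu - \lambda' \mu'}{\lambda''} \;=\; \frac{\nu - \lambda}{1+\lambda},
\]
which is precisely the asserted formula. The only substantive choice — and the one delicate point — is the regrouping into two tensor summands: placing the factor $I + L$ into the $L_2$ slot is what makes the non-degeneracy condition $\lambda'' \neq 0$ coincide with the natural exceptional value $\lambda = -1$ and keeps the denominator in the formula for $\mu$ well-defined under exactly the stated hypothesis; a different grouping would impose an artificial restriction unrelated to $\lambda = -1$.
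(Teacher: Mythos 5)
Your proof is correct, and the formula you obtain, $\mu = (\nu-\lambda)/(1+\lambda)$ with the exceptional value $\lambda=-1$ coming from $\lambda''=1+\lambda\neq 0$, is exactly the paper's statement. The only difference from the paper's (implicit) route is the bookkeeping: the paper treats Theorem~\ref{phiproektnorm} as a direct specialization of the contraction theorem to the three-summand product $M\otimes I + I\otimes L + M\otimes L$, relying on the remark that Theorem~\ref{phiproekt}, stated for two tensor summands, extends verbatim to more summands; you instead regroup the adjacency matrix as $I\otimes L + M\otimes(I+L)$ so that the two-summand version applies literally, with $(M_1,M_2)=(I,M)$ and $(L_1,L_2)=(L,I+L)$. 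Your regrouping buys you the convenience of not having to restate or reprove the multi-summand variant, and it makes transparent why $\lambda=-1$ is the natural excluded value; the paper's route is more uniform across Theorems~\ref{phiproekttensor}--\ref{phiproektlex} since it handles all products by the same template without ad hoc regroupings. Your verification of the hypotheses (common orthogonal eigenbases for $(I,M)$ and for $(L,I+L)$, and $\mu'=1$ for the identity factor) is sound; note only that, as in the paper's own statements of these corollaries, one tacitly assumes $f=P^h_g\neq 0$, since otherwise $f$ is not an eigenfunction of $I$ (or of anything) and the conclusion must be read as the identity $Mf=\mu f$.
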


To establish Theorem~\ref{phiproekt} for the lexicographic product, Corollary~\ref{commonJ} allows us to require just the $r$-regularity of the graph $H$ as an additional condition. Note that  in this case Theorem~\ref{phiproekt} can be stated only for the largest eigenvalue $\lambda = r$ of the graph $H$.

\begin{teorema} \label{phiproektlex}
If  $(M \otimes J + I \otimes L, h, \nu )$ and $(L, \mathbb{1}, r)$ are eigenfunctions in the lexicographic products $G \cdot H$ and in the $r$-regular graph $H$, then $(M, f, \mu)$ is an eigenfunction in the graph $G$,  where  $f(v) = P^h_{\mathbb{1}} =  \sum\limits_{u \in U} h(v,u)$ and $\mu = \frac{\nu - r}{|U|}$.
\end{teorema}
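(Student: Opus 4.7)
The plan is to deduce Theorem~\ref{phiproektlex} as a special case of the contraction theorem (Theorem~\ref{phiproekt}) by choosing the matrices carefully. Write the adjacency matrix of $G \cdot H$ as $N = M_1 \otimes L_1 + M_2 \otimes L_2$ with
\[
M_1 = I,\ L_1 = L,\ M_2 = M,\ L_2 = J,
\]
so that $N = I \otimes L + M \otimes J$. The reason for this particular ordering (rather than $L_1 = J$, $L_2 = L$) is the nonvanishing hypothesis $\lambda'' \neq 0$ in Theorem~\ref{phiproekt}; with my choice, $\lambda''$ will be $|U|$, which is always nonzero, whereas the swapped choice would force me to assume $r \neq 0$.

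Next I would verify that the two pairs satisfy the standing assumption of Theorem~\ref{phiproekt}, namely the existence of a common full set of pairwise orthogonal eigenvectors. For the pair $(M_1, M_2) = (I, M)$ this is trivial: since $M$ is symmetric, any orthogonal eigenbasis of $M$ works, and every vector is an eigenvector of $I$. For the pair $(L_1, L_2) = (L, J)$, I would invoke Corollary~\ref{commonJ}, which gives that eigenfunctions of an $r$-regular graph $H$ are automatically eigenvectors of $J$; combining this with the symmetry of $L$, one obtains a pairwise orthogonal basis common to $L$ and $J$.

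Now I read off the relevant eigenvalues of $g = \mathbb{1}$: from the $r$-regularity of $H$, $L\mathbb{1} = r\mathbb{1}$, so $\lambda' = r$; and $J\mathbb{1} = |U|\mathbb{1}$, so $\lambda'' = |U| \neq 0$. The candidate eigenfunction $f = P^h_g$ is trivially an eigenvector of $M_1 = I$ with eigenvalue $\mu' = 1$, so the hypothesis of Theorem~\ref{phiproekt} asking for $(M_1, f, \mu')$ to be an eigenpair is automatically satisfied. Theorem~\ref{phiproekt} then yields that $f$ is an eigenvector of $M_2 = M$ with eigenvalue
\[
\mu'' = \frac{\nu - \lambda'\mu'}{\lambda''} = \frac{\nu - r}{|U|},
\]
exactly as claimed. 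Finally, unfolding the definition $P^h_g = Hg$ with $g = \mathbb{1}$ recovers $f(v) = \sum_{u \in U} h(v,u)$.

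The only genuinely nontrivial step is the choice of orientation of the tensor factors; once that is made, everything else reduces to checking hypotheses already handled by Corollary~\ref{commonJ} and the symmetry of $M$ and $L$. Alternatively, one could bypass Theorem~\ref{phiproekt} entirely and give a one-line direct verification by summing the identity $(Nh)(v,u) = \nu h(v,u)$ over $u \in U$, using that the column sums of $L$ all equal $r$; this makes the regularity hypothesis on $H$ transparent and serves as a useful sanity check on the formula.
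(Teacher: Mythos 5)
Your proof is correct and follows the paper's intended route: Theorem~\ref{phiproektlex} is obtained there exactly as a specialization of the contraction theorem (Theorem~\ref{phiproekt}) with $M_1 = I$, $L_1 = L$, $M_2 = M$, $L_2 = J$, using Corollary~\ref{commonJ} to get the common orthogonal eigenbasis of $L$ and $J$ and the fact that $\lambda'' = |U| \neq 0$, which is precisely why only $g = \mathbb{1}$ (the eigenvalue $\lambda = r$) is admissible. Your concluding direct verification by summing $(Nh)(v,u)=\nu h(v,u)$ over $u \in U$ is a nice sanity check but not a departure from the paper's argument.
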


\section{Examples and applications} \label{primsec}

\subsection{Calculating spectra of some graphs} \label{spectrsection}

In this subsection we use Theorem~\ref{specprodgraph} and its corollaries (Theorems~\ref{sobftensor} -- \ref{sobflexprod}) for finding spectra of a series of graphs.

Let us recall that the spectrum of a graph $G$ is a multiset  $\{ \mu_1^{[\nu_1]}, \ldots,\mu_s^{[\nu_s]}\}$ of eigenvalues of its adjacency matrix $M(G)$, where $\nu_1, \ldots, \nu_s$ denote the multiplicities of the eigenvalues.

We start with the spectrum of the complete graph.

\begin{utv}
The spectrum of the complete graph $K_n$ on $n$ vertices is
$$sp(K_n) = \{ (-1)^{[n-1]}, (n-1)^{[1]} \}.$$
\end{utv}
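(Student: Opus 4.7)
The plan is to realize the adjacency matrix of $K_n$ as a shift of the unity matrix, namely $M(K_n) = J - I$, and then read off the spectrum from what has already been established for $J$ combined with the shift machinery in Subsection~\ref{structIJ}.

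First I would recall (from the analysis in the proof of Proposition~\ref{completegrstruct}) that the matrix $J$ of order $n$ has spectrum $\{0^{[n-1]},\, n^{[1]}\}$: indeed $J$ has rank one, so $0$ is an eigenvalue of multiplicity $n-1$, and $\mathbb{1}$ is an eigenvector with $J\mathbb{1}=n\mathbb{1}$. Next I would apply Corollary~\ref{sdvignaI} with $\alpha=-1$ to the nonsingular perfect structure $(J,R,T)$ obtained from the canonical form (Property~\ref{canonstruct}) of $J$: this yields a perfect structure $(J-I,\,R,\,T-I)$, so every eigenvector of $J$ for eigenvalue $\lambda$ is an eigenvector of $J-I$ for eigenvalue $\lambda-1$. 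Since $R$ has full rank, the full multiset of eigenvalues of $J-I$ is obtained by subtracting $1$ from each eigenvalue of $J$, giving $\{(-1)^{[n-1]},\,(n-1)^{[1]}\}$.

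Finally I would conclude by noting that $M(K_n)=J-I$ by the very definition of the complete graph recalled in Section~\ref{intersec}, so $\operatorname{sp}(K_n)=\operatorname{sp}(J-I)=\{(-1)^{[n-1]},\,(n-1)^{[1]}\}$.

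There is no real obstacle here: the entire statement is a one-line application of the ``shift by $\alpha I$'' corollary, once the trivial spectrum of $J$ is in hand. The only thing to be mildly careful about is bookkeeping of multiplicities, which is immediate because the similarity transformation used in passing to the canonical form preserves the multiset of eigenvalues.
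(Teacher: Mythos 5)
Your argument is correct: $M(K_n)=J-I$ by the paper's definition of the complete graph, the spectrum of $J$ (eigenvalue $n$ with multiplicity $1$, eigenvalue $0$ with multiplicity $n-1$) is exactly what is recorded in the proof of Proposition~\ref{completegrstruct}, and applying Corollary~\ref{sdvignaI} with $\alpha=-1$ to the diagonalizing perfect structure $(J,R,T)$ with $R$ square and nonsingular gives the similarity $(J-I,R,T-I)$, so the spectrum shifts by $-1$ with multiplicities preserved. The only point of comparison worth noting is that the paper itself offers no proof of this proposition; it is stated as a standard fact and then used as input elsewhere (e.g.\ in the $k$-partite and Hamming graph computations). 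Your derivation is nonetheless entirely in the spirit of the paper's own machinery: it is essentially the same shift-by-$\alpha I$ argument the paper uses in its proposition on the spectrum of the complement $\overline{G}$ with adjacency matrix $J-M-I$, specialized to $M=0$, so nothing is missing and no step fails.
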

 
A \textit{matching} $M_n$ is a graph on $2n$ vertices and with $n$ edges such that each vertex is incident to exactly one edge.

\begin{utv}
 The spectrum of the matching graph $M_n$ on $2n$  vertices is
$$sp(M_n) =  \left\{ (-1)^{[n]}, 1^{[n]} \right\}.$$
\end{utv}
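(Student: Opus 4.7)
My plan is to realize $M_n$ as a Cartesian product of two very simple graphs and then apply Theorem~\ref{sobfdecart}. Specifically, I would observe that the matching on $2n$ vertices is isomorphic to $K_2 \Box \overline{K_n}$, where $\overline{K_n}$ denotes the empty graph on $n$ vertices (no edges). This is immediate from the definition of the Cartesian product: with $W = \emptyset$, two vertices $(v_1,u_1)$ and $(v_2,u_2)$ of $K_2 \Box \overline{K_n}$ are adjacent iff $u_1 = u_2$ and $v_1 v_2 \in E(K_2)$, which produces exactly $n$ vertex-disjoint copies of $K_2$.

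Next I would assemble the two ingredient spectra. The spectrum $sp(K_2) = \{-1,1\}$ is the $n=2$ instance of the preceding proposition on $sp(K_n)$. The spectrum of $\overline{K_n}$ is trivially $\{0^{[n]}\}$ since its adjacency matrix is the zero matrix of order $n$ (every vector is an eigenvector for eigenvalue $0$).

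Finally, I would apply Theorem~\ref{sobfdecart}, which tells us that
\[
sp(K_2 \Box \overline{K_n}) = \{\mu_i + \lambda_j : \mu_i \in sp(K_2),\ \lambda_j \in sp(\overline{K_n})\}
\]
as a multiset. The sums are $-1+0 = -1$ (with multiplicity $n$, one for each eigenvalue of $\overline{K_n}$) and $1+0 = 1$ (also with multiplicity $n$), yielding $sp(M_n) = \{(-1)^{[n]}, 1^{[n]}\}$.

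I do not anticipate any obstacle; the only point that requires a moment of thought is the product decomposition $M_n = K_2 \Box \overline{K_n}$, after which the result is an immediate consequence of the previously established spectrum theorem for Cartesian products.
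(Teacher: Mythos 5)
Your proof is correct, but it takes a different route than the paper. You decompose the matching as the Cartesian product $K_2 \Box \overline{K_n}$ (adjacency matrix $M(K_2)\otimes I$, since the second factor contributes the zero matrix) and invoke Theorem~\ref{sobfdecart}, so the eigenvalues arise additively as $\pm 1 + 0$, each with multiplicity $n$. The paper instead decomposes the matching as the tensor product of the ``graph'' with adjacency matrix $I$ and the single-edge graph (adjacency matrix $I \otimes M(K_2)$) and invokes Theorem~\ref{sobftensor}, so the eigenvalues arise multiplicatively as $1\cdot(\pm 1)$. The two factorizations of the adjacency matrix differ only by the order of the Kronecker factors (i.e.\ by a vertex permutation), and both are special cases of the same general Theorem~\ref{specprodgraph}, whose hypothesis on a common full eigenvector set is trivially met in either case because $I$ and the zero matrix have every vector as an eigenvector. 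Your version has the mild aesthetic advantage of using only genuine simple graphs ($K_2$ and the empty graph) rather than the loop-decorated graph with adjacency matrix $I$, while the paper's version matches its running convention of pairing graphs with $I$ in tensor products; substantively the two arguments are equally immediate and equally rigorous.
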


\begin{proof}
The matching graph is the tensor product of the graph with adjacency matrix $I$ and the single-edge graph. 
It remains to note that $sp(I) = \{ 1^{[n]}\}$ and the spectrum of the single-edge graph is $\{ (-1)^{[1]}, 1^{[1]}\}$.
\end{proof}

The \textit{complete bipartite graph} $K_{n,n}$ is a graph on $2n$ vertices divided into two parts $U$ and $V$ of the same size $n$, edges of $K_{n,n}$ are pairs $uv$, where $u \in U$ and $v \in V$.

\begin{utv}
The spectrum of the complete bipartite graph $K_{n,n}$ with parts of size $n$ is
$$sp(K_{n,n}) =  \left\{ (- n)^{[1]}, 0^{[2n-2]}, n^{[1]} \right\}.$$
\end{utv}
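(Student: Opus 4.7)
The plan is to realize $K_{n,n}$ as a tensor product of two smaller graphs whose spectra are already known and then invoke Theorem~\ref{sobftensor}. Concretely, if we let $G_n$ denote the ``graph'' (with loops) on $n$ vertices whose adjacency matrix is the unity matrix $J_n$, then a direct block computation gives
$$M(K_{n,n}) \;=\; \begin{pmatrix} 0 & J_n \\ J_n & 0 \end{pmatrix} \;=\; \begin{pmatrix} 0 & 1 \\ 1 & 0 \end{pmatrix} \otimes J_n \;=\; M(K_2) \otimes M(G_n),$$
so $K_{n,n} = K_2 \times G_n$ in the sense of the tensor product introduced in Section~\ref{intersec}. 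Since the paper allows any diagonalizable matrix as an adjacency matrix, working with $G_n$ is legitimate.

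Next, I would record the two ingredient spectra. The single-edge graph $K_2$ has spectrum $\{(-1)^{[1]}, 1^{[1]}\}$, which was already used in the proof of the preceding proposition on $M_n$. The matrix $J_n$ has rank $1$ and row sums equal to $n$, so its spectrum is $\{0^{[n-1]}, n^{[1]}\}$; this was also observed in the proof of Proposition~\ref{completegrstruct}.

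Finally, by Theorem~\ref{sobftensor} the spectrum of the tensor product $K_2 \times G_n$ is the multiset of all products of an eigenvalue of $K_2$ with an eigenvalue of $G_n$. Taking these products yields $(-1)\cdot 0 = 0$ with multiplicity $n-1$, $1\cdot 0 = 0$ with multiplicity $n-1$, $(-1)\cdot n = -n$ with multiplicity $1$, and $1 \cdot n = n$ with multiplicity $1$, giving exactly $\{(-n)^{[1]}, 0^{[2n-2]}, n^{[1]}\}$, as required.

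There is essentially no obstacle here: the only point requiring minor care is that $G_n$ is not a simple graph, but the framework of the paper accommodates arbitrary diagonalizable adjacency matrices, and $J_n$ is diagonalizable. Once the decomposition $M(K_{n,n}) = M(K_2) \otimes J_n$ is written down, Theorem~\ref{sobftensor} finishes the argument immediately.
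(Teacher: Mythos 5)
Your proposal is correct and follows exactly the paper's own argument: write $M(K_{n,n}) = M(K_2) \otimes J_n$, i.e.\ realize $K_{n,n}$ as the tensor product of the single-edge graph and the $n$-vertex graph with adjacency matrix $J$, and apply Theorem~\ref{sobftensor} to the spectra $\{(-1)^{[1]},1^{[1]}\}$ and $\{0^{[n-1]},n^{[1]}\}$. Your additional remark that $J_n$ is diagonalizable (so the framework applies) is a harmless extra precaution, not a deviation.
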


\begin{proof}
The complete bipartite graph $K_{n,n}$ is the tensor product of the single-edge graph and the $n$-vertex graph with the adjacency matrix $J$.
It remains to note that  $sp(J) = \{ 0^{[n-1]}, n^{[1]}\}$ and the spectrum of the single-edge graph is $\{ (-1)^{[1]}, 1^{[1]}\}$.
\end{proof}

Similarly, the \textit{complete $k$-partite graph} $K_{n , \ldots, n}$ is the graph on $kn$ vertices divided into $k$ parts of size $n$ such that $uv$ is an edge of the graph $K_{n,\ldots, n}$  if and only if vertices $u$ and $v$ belong to different parts.

\begin{utv}
The spectrum of the complete $k$-partite graph $K_{n, \ldots, n}$ with parts of size $n$ is
$$sp(K_{n, \ldots, n}) =  \left\{(-n)^{[k-1]}, 0^{[k(n-1)]}, (n(k - 1))^{[1]} \right\}.$$
\end{utv}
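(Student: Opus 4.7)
The plan is to realize $K_{n,\ldots,n}$ as a tensor product of matrices, mirroring the proof just given for $K_{n,n}$ (which is the case $k=2$). Ordering the $kn$ vertices part by part, two vertices $(i,u)$ and $(j,v)$ are adjacent exactly when $i \ne j$, so the adjacency blocks are $J_n$ off-diagonal and $0$ on the diagonal. Hence
$$M(K_{n,\ldots,n}) = M(K_k) \otimes J_n = (J_k - I_k) \otimes J_n.$$
Note that although $J_n$ is not the adjacency matrix of a simple graph, it is a symmetric complex matrix, which is all that Theorem~\ref{specprodgraph} requires.

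Next I would invoke Theorem~\ref{specprodgraph} (or equivalently Theorem~\ref{sobftensor}). Both $M(K_k)$ and $J_n$ are real symmetric and therefore admit full orthogonal eigenbases, so the ``consolidated full set of eigenvectors'' condition is trivially satisfied. The input spectra are
$$sp(K_k) = \{(-1)^{[k-1]},\, (k-1)^{[1]}\}, \qquad sp(J_n) = \{0^{[n-1]},\, n^{[1]}\},$$
and the theorem tells us that $sp(K_{n,\ldots,n})$ is the multiset of all products $\mu\lambda$ with $\mu \in sp(K_k)$ and $\lambda \in sp(J_n)$.

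The remaining step is pure bookkeeping: tabulate the four combinations and merge the zero contributions. The pair $(\mu,\lambda) = (k-1,n)$ yields eigenvalue $n(k-1)$ with multiplicity $1$; the pair $(-1,n)$ yields $-n$ with multiplicity $k-1$; the pair $(k-1,0)$ yields $0$ with multiplicity $n-1$; and the pair $(-1,0)$ yields $0$ with multiplicity $(k-1)(n-1)$. Adding the last two gives the zero eigenvalue with total multiplicity $(n-1)+(k-1)(n-1)=k(n-1)$, and $1+(k-1)+k(n-1)=kn$ confirms the full count. There is no real obstacle; the only point requiring a moment of care is that $J_n$ should be handled as a general symmetric matrix rather than as a graph adjacency matrix, which is already how the preceding spectrum of $K_{n,n}$ was established.
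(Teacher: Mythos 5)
Your proposal is correct and follows essentially the same route as the paper: it writes $K_{n,\ldots,n}$ as the tensor product of $K_k$ and the $n$-vertex graph with adjacency matrix $J$, then multiplies the spectra $sp(K_k) = \{(-1)^{[k-1]},(k-1)^{[1]}\}$ and $sp(J) = \{0^{[n-1]}, n^{[1]}\}$ via the tensor-product spectrum theorem. The multiplicity bookkeeping you carry out is exactly what the paper leaves implicit, so there is nothing to add.
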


\begin{proof}
The complete $k$-partite graph $K_{n, \ldots, n}$ is the tensor product of the complete graph $K_n$ and the $n$-vertex graph with the adjacency matrix $J$.
It remains to note that $sp(K_k) = \{ (-1)^{[k-1]}, (k-1)^{[1]}\}$ and that  $sp(J) = \{ 0^{[n-1]}, n^{[1]}\}$.
\end{proof}

The \textit{Hamming graph} $H(n,q)$ is a graph on $q^n$ vertices that is the $n$-th  Cartesian power of the complete graph $K_q$.

\begin{utv}
The spectrum of the Hamming graph $H(n,q)$ is 
$$sp(H(n,q)) = \{  (-n)^{[(q-1)^n]}, \ldots,  (n(q-1) - qi)^{\left[{n \choose i} (q-1)^i\right]}, \ldots, (n(q-1))^{[1]}  \}.$$
\end{utv}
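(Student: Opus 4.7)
The plan is to apply the Cartesian product spectrum formula (Theorem~\ref{sobfdecart}) $n-1$ times to the definition $H(n,q) = K_q \,\Box\, K_q \,\Box\, \cdots \,\Box\, K_q$. Since the spectrum of $K_q$ has already been established as $\{(-1)^{[q-1]}, (q-1)^{[1]}\}$, by iteration (or a straightforward induction on $n$) every eigenvalue of $H(n,q)$ has the form
$$\mu = \sum_{j=1}^{n} \mu_j, \qquad \mu_j \in \{-1,\, q-1\},$$
and each such expression produces an eigenvalue as a multiset element with the multiplicity obtained by multiplying the multiplicities of the chosen $\mu_j$'s in the spectrum of $K_q$.

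Next I would group these sums by the number $i$ of indices $j$ with $\mu_j = -1$ (so the remaining $n-i$ indices satisfy $\mu_j = q-1$). For such an assignment the sum equals
$$\mu = -i + (n-i)(q-1) = n(q-1) - iq,$$
which matches the claimed eigenvalues.

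For the multiplicity, there are $\binom{n}{i}$ choices of which $i$ coordinates get $\mu_j = -1$. Once the positions are fixed, each of the $i$ factors $-1$ has multiplicity $q-1$ in the spectrum of $K_q$, while each of the $n-i$ factors $q-1$ has multiplicity $1$. Thus the joint multiplicity contributed by this combinatorial type is $\binom{n}{i}(q-1)^i \cdot 1^{n-i} = \binom{n}{i}(q-1)^i$, as required. A sanity check via $\sum_{i=0}^{n}\binom{n}{i}(q-1)^i = q^n$ confirms that the total count of eigenvalues equals the number of vertices of $H(n,q)$.

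The only mildly nontrivial point is verifying that distinct values of $i$ yield distinct eigenvalues (so that the multiplicities may be written as disjoint entries of the spectrum multiset). This is immediate because the function $i \mapsto n(q-1) - iq$ is strictly decreasing in $i$ for $q\geq 2$, so there is no collapse between different levels $i$, and the claimed formula for $sp(H(n,q))$ follows.
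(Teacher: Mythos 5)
Your proposal is correct and follows essentially the same route as the paper, which proves the statement by induction on $n$ using Theorem~\ref{sobfdecart} together with $sp(K_q) = \{ (-1)^{[q-1]}, (q-1)^{[1]}\}$; you simply carry out the bookkeeping of eigenvalues $n(q-1)-iq$ and multiplicities $\binom{n}{i}(q-1)^i$ explicitly.
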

\begin{proof}
The statement is easy to prove by induction on $n$ with the help of Theorem~\ref{sobfdecart} and the fact that $sp(K_q) = \{ (-1)^{[q-1]}, (q-1)^{[1]}\}$.
\end{proof}

A \textit{path} $P_n$ is a graph on $n$ vertices $v_1, \ldots, v_n$ with edges $v_{i}v_{i+1}$, $i =1, \ldots, n-1$. A \textit{cycle}  $C_n$ is a graph on $n$ vertices  obtained from the path $P_n$ by adding the  edge $v_1 v_n$. The following statement on the spectra of path and cycle graphs can be found, for example, in book~\cite{doob}.

\begin{utv} \label{sppathcyc}
The spectrum of the path graph $P_n$ is
$$sp(P_n) = \left\{  \left( 2\cos \frac{\pi i }{n+1} \right)^{[1]} ~ \vline ~  i = 1, \ldots, n   \right\}.$$
The spectrum of the cycle $C_n$ is
$$sp(C_n) = \left\{  \left( 2\cos \frac{2 \pi i }{n} \right)^{[1]} ~ \vline ~ i = 1, \ldots, n \right\}.$$
\end{utv}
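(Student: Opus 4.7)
The plan is to treat the two cases separately and verify the claimed eigenvalues directly on exhibited eigenvectors, after which a dimension count completes each spectrum.

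For the cycle $C_n$, note that its adjacency matrix is the circulant matrix $M = S + S^{-1}$, where $S$ denotes the cyclic shift permutation matrix of order $n$. The shift $S$ is diagonalized by the Fourier basis: for $\omega = e^{2\pi\mathbf{i}/n}$ and $k = 0, 1, \ldots, n-1$, the vector $v_k = (1, \omega^k, \omega^{2k}, \ldots, \omega^{(n-1)k})^T$ is an eigenvector of $S$ with eigenvalue $\omega^k$. Hence by Property~\ref{structgrsum} (or by direct check), $v_k$ is an eigenvector of $M$ with eigenvalue $\omega^k + \omega^{-k} = 2\cos(2\pi k/n)$. The vectors $v_0, \ldots, v_{n-1}$ form a basis of $\mathbb{C}^n$, so this accounts for the full spectrum with the stated multiplicities.

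For the path $P_n$, the adjacency matrix $M$ acts by $(Mf)(j) = f(j-1) + f(j+1)$ for interior vertices $j = 2, \ldots, n-1$, and $(Mf)(1) = f(2)$, $(Mf)(n) = f(n-1)$ at the endpoints. The ansatz to try is
$$f_i(j) = \sin\!\left(\frac{\pi i j}{n+1}\right), \quad j = 1, \ldots, n, \quad i = 1, \ldots, n.$$
The sum-to-product identity $\sin(x-y) + \sin(x+y) = 2\sin(x)\cos(y)$ gives $f_i(j-1) + f_i(j+1) = 2\cos(\pi i/(n+1)) f_i(j)$ for all $j$; the two boundary cases work out automatically because $f_i(0) = \sin(0) = 0$ and $f_i(n+1) = \sin(\pi i) = 0$, so formally extending $f_i$ by zero to $j = 0$ and $j = n+1$ reduces the boundary relations to the interior recurrence. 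Thus each $f_i$ is an eigenfunction of $P_n$ with eigenvalue $2\cos(\pi i/(n+1))$.

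It remains to argue that the $n$ vectors $f_1, \ldots, f_n$ are linearly independent; since $M$ is symmetric and the $n$ eigenvalues $2\cos(\pi i/(n+1))$ are pairwise distinct for $i = 1, \ldots, n$ (the function $\cos$ is strictly decreasing on $(0, \pi)$), linear independence is automatic, and the multiset $\{2\cos(\pi i/(n+1)) : i = 1, \ldots, n\}$ exhausts the spectrum. The only delicate point in the whole proof is precisely this handling of the boundary conditions for $P_n$, i.e., observing that the sine ansatz vanishes at the ``phantom'' indices $0$ and $n+1$ so that the interior three-term recurrence continues to hold at the endpoints; once that is noted, both parts reduce to routine trigonometry and a dimension count.
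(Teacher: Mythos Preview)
Your proof is correct. Note, however, that the paper does not actually prove this proposition: it is stated as a known fact with a citation to the book of Cvetkovi\'c, Doob, and Sachs~\cite{doob}, and no argument is given. Your direct verification --- diagonalizing the circulant adjacency matrix of $C_n$ via the Fourier basis, and checking the sine ansatz for $P_n$ with the vanishing at the phantom indices $0$ and $n+1$ handling the boundary --- is the standard textbook approach and would be entirely appropriate here. One minor remark: for the cycle you say ``with the stated multiplicities,'' but be aware that the paper's listing with exponent $[1]$ is as a multiset indexed by $i$, so values like $2\cos(2\pi k/n)$ and $2\cos(2\pi(n-k)/n)$ coincide and the actual multiplicity of a given numerical eigenvalue can be $2$; your argument via the basis $v_0,\ldots,v_{n-1}$ handles this correctly regardless.
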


The \textit{grid} graph $Gr_{m,n}$ is a graph on $m n$ vertices that is the Cartesian product of paths $P_m$ and $P_n$. The \textit{ladder} graph $L_n$ is a graph on $2n$ vertices obtained as the Cartesian product of the path $2n$ and  the single-edge graph (a path $P_2$).

\begin{utv}
The spectrum of the grid graph $Gr_{m,n}$ is
$$sp(Gr_{m,n}) = \left\{  \left( 2\cos \frac{\pi i }{m+1} + 2\cos \frac{\pi j}{n+1} \right)^{[1]} ~ \vline ~  i = 1, \ldots, m; ~ j = 1, \ldots, n   \right\}.$$
In particular, the spectrum of the ladder  $L_n$ is
$$sp(L_{n}) = \left\{  \left( 2 \cos \frac{\pi i }{n+1} -1 \right)^{[1]},  ~ \left( 2  \cos \frac{\pi i }{n+1} +1 \right)^{[1]} ~ \vline ~  i = 1, \ldots, n;  \right\}$$
\end{utv}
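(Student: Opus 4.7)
The plan is to invoke the Cartesian-product machinery already in hand and read off the answer from the known spectrum of the path.

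First, I would recall that by definition $Gr_{m,n} = P_m \Box P_n$, so Theorem~\ref{sobfdecart} applies: the spectrum of the Cartesian product is the multiset of pairwise sums of eigenvalues of the two factors. Applying Proposition~\ref{sppathcyc} to each factor gives $sp(P_m) = \{ 2\cos(\pi i/(m+1)) : i=1,\ldots,m \}$ and $sp(P_n) = \{ 2\cos(\pi j/(n+1)) : j=1,\ldots,n \}$, and taking all pairwise sums yields exactly the claimed multiset. Since both factor spectra consist of simple eigenvalues indexed by $i$ and $j$, the counting $mn$ matches the number of vertices, and no multiplicity collapse needs to be argued separately.

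For the ladder $L_n = P_n \Box P_2$, I would specialize the same formula with the second factor being $P_2$, whose adjacency matrix has spectrum $\{-1, 1\}$ (either from Proposition~\ref{sppathcyc} with $2\cos(\pi/3) = 1$ and $2\cos(2\pi/3) = -1$, or directly). Adding $\pm 1$ to each eigenvalue $2\cos(\pi i/(n+1))$ of $P_n$ gives the two families listed in the statement.

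I do not anticipate a real obstacle here: the entire argument is a direct invocation of Theorem~\ref{sobfdecart} together with Proposition~\ref{sppathcyc}. The only minor point to be explicit about is that Theorem~\ref{sobfdecart} delivers the spectrum as a multiset of the correct size $mn$, so no eigenvalues are missed and no fictitious ones are introduced.
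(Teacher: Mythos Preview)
Your proposal is correct and follows essentially the same approach as the paper: the paper's proof simply states that it is sufficient to use Theorem~\ref{sobfdecart} together with the path spectrum from Proposition~\ref{sppathcyc}. Your version is a bit more explicit about the ladder specialization and the multiset size, but the method is identical.
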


\begin{proof}
It is sufficient to use Theorem~\ref{sobfdecart} and the spectrum of path $P_n$ from Proposition~\ref{sppathcyc} to prove the proposition.
\end{proof}

A \textit{square grid on torus} $Tr_{m,n}$ is a graph on $mn$ vertices that is the Cartesian product of cycles $C_m$ and $C_n$.
A \textit{prism} $Pr_n$ is a graph on $2n$ vertices obtained as the Cartesian product of the cycle $C_n$ and the single-edge graph.

\begin{utv}
The spectrum of the square grid on torus $Tr_{m,n}$ is
$$sp(Tr_{m,n}) = \left\{   \left( 2\cos \frac{2\pi i }{m} + 2\cos \frac{2\pi j}{n} \right)^{[1]} ~ \vline ~  i = 1, \ldots, m; ~ j = 1, \ldots, n   \right\}.$$
In particular, the spectrum of the prism graph $Pr_{n}$ is
$$sp(Pr_{n}) = \left\{  \left( 2  \cos \frac{2\pi i }{n} -1 \right)^{[1]}, ~\left(2 \cos \frac{2\pi i }{n} +1 \right)^{[1]}  ~ \vline ~ i = 1, \ldots, n   \right\}.$$
\end{utv}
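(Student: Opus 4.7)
The plan is simply to invoke Theorem~\ref{sobfdecart} (spectrum of a Cartesian product) together with the cycle spectrum from Proposition~\ref{sppathcyc}, since by definition $Tr_{m,n} = C_m \Box C_n$ and $Pr_n = C_n \Box P_2$.

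First, I would recall that Theorem~\ref{sobfdecart} gives
\[ sp(G \Box H) = \{ \mu_i + \lambda_j \mid \mu_i \in sp(G),\ \lambda_j \in sp(H)\} \]
as a multiset. Applied to $G = C_m$ and $H = C_n$, Proposition~\ref{sppathcyc} tells us that $sp(C_m) = \{2\cos(2\pi i/m) : i = 1,\dots,m\}$ and $sp(C_n) = \{2\cos(2\pi j/n) : j = 1,\dots,n\}$, so each eigenvalue of $Tr_{m,n}$ has the claimed form $2\cos(2\pi i/m) + 2\cos(2\pi j/n)$, and ranging $i,j$ over the appropriate indices produces all $mn$ eigenvalues with correct multiplicity.

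For the prism $Pr_n = C_n \Box P_2$, I would observe that $P_2$ is the single-edge graph, whose adjacency matrix $\bigl(\begin{smallmatrix}0&1\\1&0\end{smallmatrix}\bigr)$ has spectrum $\{-1, 1\}$ (which also follows from Proposition~\ref{sppathcyc} with $n=2$: $2\cos(\pi/3) = 1$ and $2\cos(2\pi/3) = -1$). Applying Theorem~\ref{sobfdecart} once more yields the two families $2\cos(2\pi i/n) - 1$ and $2\cos(2\pi i/n) + 1$ for $i = 1,\dots,n$, which is exactly the stated spectrum.

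There is essentially no obstacle here: both assertions are immediate specializations of the Cartesian-product spectrum theorem to known spectra that have already been recorded in the paper. The only bookkeeping to check is that the multiplicities match, which they do because the multiset union on the right-hand side has the correct cardinality ($mn$ and $2n$ respectively) and the construction in Theorem~\ref{specprodgraph} produces linearly independent eigenvectors via tensor products.
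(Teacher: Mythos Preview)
Your argument is correct and matches the paper's own proof, which likewise just invokes Theorem~\ref{sobfdecart} together with the cycle spectrum from Proposition~\ref{sppathcyc}. The additional remarks you make about $P_2$ and multiplicities are fine elaborations but not required beyond what the paper does.
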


\begin{proof}
It is sufficient to use Theorem~\ref{sobfdecart} and the spectrum of the path $C_n$ from Proposition~\ref{sppathcyc} to prove the proposition.
\end{proof}

In conclusion, we consider two graph operations that can be presented as a graph product.

The \textit{double graph} $D(G)$ of  a graph $G$ is a graph obtained by taking two copies of $G$ (including all its edges) and adding all edges $u_1 v_2$ between the copies for each edge $uv$ of the graph $G$. The double graph $D(G)$ is the tensor product of the graph $G$ and the graph with adjacency matrix $J$ on $2$ vertices. By Theorem~\ref{sobftensor}, we have the following statement.

\begin{utv}
Given a graph $G$ and its spectrum $sp(G)$, the spectrum of the double graph $D(G)$ is
$$sp(D(G)) = \{ 0^{[n]} \} \cup( 2 \cdot sp(G)).$$
\end{utv}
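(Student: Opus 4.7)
The plan is to invoke Theorem~\ref{sobftensor} directly, since the double graph $D(G)$ has already been identified as the tensor product $G \times H_2$, where $H_2$ denotes the graph on two vertices whose adjacency matrix is the $2 \times 2$ all-ones matrix $J$. Thus the entire statement reduces to computing the spectrum of this $2 \times 2$ matrix and taking the multiset of pairwise products.

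First I would record that $sp(J) = \{0^{[1]}, 2^{[1]}\}$ for the $2 \times 2$ unity matrix; this is a one-line computation (the eigenvectors are $(1,1)^T$ and $(1,-1)^T$ with eigenvalues $2$ and $0$), and it also appears implicitly several times in the preceding subsection. Next, letting $n = |V(G)|$ and $sp(G) = \{\mu_1, \ldots, \mu_n\}$ (with multiplicities), Theorem~\ref{sobftensor} yields
\[
sp(D(G)) = sp(G \times H_2) = \{ \mu_i \lambda_j \mid \mu_i \in sp(G),\ \lambda_j \in sp(H_2)\}
\]
as a multiset. Splitting the pairs $(i,j)$ according to whether $\lambda_j = 0$ or $\lambda_j = 2$ partitions this multiset into the $n$ copies of $0$ (one for each $\mu_i$ paired with the zero eigenvalue of $H_2$) and the collection $\{2\mu_1, \ldots, 2\mu_n\} = 2 \cdot sp(G)$. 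This is exactly the claimed decomposition $\{0^{[n]}\} \cup (2 \cdot sp(G))$.

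There is essentially no obstacle here: the only thing to verify is the identification of $D(G)$ with $G \times H_2$, which is asserted in the paragraph preceding the proposition and is immediate from the definitions (the tensor product adjacency $(v_1,u_1) \sim (v_2,u_2)$ iff $v_1 v_2 \in E(G)$ and $u_1 u_2 \in E(H_2)$ gives exactly the cross-copy edges $u_1 v_2$ for every edge $uv$ of $G$, while the self-edges from the loops implicit in using $J$ rather than the single-edge adjacency matrix produce the two in-copy edge sets). Everything else is bookkeeping of multisets, so the proof is a single sentence citing Theorem~\ref{sobftensor} together with $sp(J_{2\times 2}) = \{0^{[1]}, 2^{[1]}\}$.
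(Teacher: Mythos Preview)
Your proposal is correct and follows exactly the paper's approach: the paper's entire argument is the sentence preceding the proposition, which identifies $D(G)$ as the tensor product of $G$ with the two-vertex graph having adjacency matrix $J$ and then invokes Theorem~\ref{sobftensor}. You have simply spelled out the bookkeeping that the paper leaves implicit.
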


The \textit{bipartite double graph} $BD(G)$ of a graph $G$ is a graph obtained by taking two copies of the vertex set of $G$ and adding all edges $u_1 v_2$ between the copies for each edge $uv$ of $G$. The bipartite double graph is the tensor product of the graph $G$ and the single-edge graph. Theorem~\ref{sobftensor} implies the following.

\begin{utv}
Given a graph $G$ and its spectrum $sp(G)$, the spectrum of the bipartite double graph $BD(G)$ is
$$sp(BD(G)) = sp(G) \cup (-sp(G)).$$
\end{utv}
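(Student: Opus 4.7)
The plan is to invoke Theorem~\ref{sobftensor} directly, exactly as was done for the double graph in the preceding proposition. First I would recall the identification already recorded in the paragraph above the statement: $BD(G) = G \times K_2$, where $K_2$ is the single-edge graph. This reduces the problem to computing the spectrum of a tensor product, which Theorem~\ref{sobftensor} describes as the multiset $\{\mu_i \lambda_j \mid \mu_i \in sp(G),\, \lambda_j \in sp(K_2)\}$.

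Next I would record the spectrum of the single-edge graph. Its adjacency matrix is $\left(\begin{smallmatrix}0 & 1 \\ 1 & 0\end{smallmatrix}\right)$, so $sp(K_2) = \{-1,\,1\}$. (This was already used in the proofs of the matching and complete bipartite propositions earlier in the subsection, so I may cite it rather than rederive it.)

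Combining these two facts, Theorem~\ref{sobftensor} yields
$$sp(BD(G)) = \{ \mu_i \cdot 1 \mid \mu_i \in sp(G)\} \cup \{ \mu_i \cdot (-1) \mid \mu_i \in sp(G)\} = sp(G) \cup (-sp(G)),$$
with multiplicities preserved because the eigenvalue $1$ and $-1$ of $K_2$ are simple. There is essentially no obstacle: the entire content of the statement is packaged in Theorem~\ref{sobftensor}, and the only substantive observation is that $BD(G)$ is a tensor product with $K_2$, which has already been noted. The only small point to be careful about is the multiset structure — ensuring that the union is read with multiplicity, so that each eigenvalue $\mu$ of $G$ contributes both $\mu$ and $-\mu$ with the multiplicity that $\mu$ has in $sp(G)$.
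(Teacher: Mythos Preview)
Your proposal is correct and follows exactly the paper's approach: the paper simply states that Theorem~\ref{sobftensor} implies the result, using the observation (made in the sentence preceding the proposition) that $BD(G)$ is the tensor product of $G$ with the single-edge graph, whose spectrum is $\{-1,1\}$. Your write-up merely unpacks this one-line justification with the same ingredients.
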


\subsection{Applications to perfect colorings} \label{perfcolpar}

The constructions of perfect structures in graph products give a series of perfect colorings and their parameter matrices for many graph classes. So it is a handy and important tool in the characterization of perfect colorings for a given graph.  To simplify the application of these constructions in future works, here we state them specially for perfect colorings.

Let $(M, P, S)$ and $(L, R, T)$ be perfect colorings in $k_1$ and $k_2$ colors in graphs $G$ and $H$ having adjacency matrices $M$ and $L$ and parameter matrices $S$ and $T$, respectively. Theorems~\ref{structtensprod} -- \ref{structnormalprod} imply the following.

\begin{teorema} \label{colortensorprod}
The matrix $P \otimes R$ is a perfect coloring in $k_1 k_2$ colors of the tensor product of graphs $G \times H$ with the parameter matrix $S \otimes T$.
\end{teorema}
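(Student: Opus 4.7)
The plan is to reduce the claim directly to Theorem~\ref{structtensprod} and then verify the two extra combinatorial conditions that distinguish a perfect coloring from a general nonsingular perfect structure: namely, that the structure matrix is a $(0,1)$-matrix with exactly one unity entry per row, and that it has full column rank.

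First, since $(M,P,S)$ and $(L,R,T)$ are perfect colorings, they are in particular nonsingular perfect structures. Theorem~\ref{structtensprod} therefore immediately yields that $(M \otimes L,\, P \otimes R,\, S \otimes T)$ is a nonsingular perfect structure in the tensor product graph $G \times H$ whose adjacency matrix is $M \otimes L$. This takes care of the defining equation $(M \otimes L)(P \otimes R) = (P \otimes R)(S \otimes T)$ and of the full rank of $P \otimes R$ (the latter also follows from $\operatorname{rank}(P \otimes R) = \operatorname{rank}(P)\cdot \operatorname{rank}(R) = k_1 k_2$).

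Next I would verify the combinatorial shape of $P \otimes R$. Since each row of $P$ has exactly one entry equal to $1$ (all others $0$), and similarly for $R$, an inspection of the block form of the Kronecker product shows that a row of $P \otimes R$ indexed by a pair $(v,u) \in V \times U$ equals $p_v \otimes r_u$, where $p_v$ and $r_u$ are the corresponding rows of $P$ and $R$. The Kronecker product of two $(0,1)$-row vectors, each having a single $1$, is again a $(0,1)$-row vector with exactly one unity entry, located at the column indexed by the pair of color classes $(c_G(v), c_H(u))$. Hence $P \otimes R$ is the characteristic matrix of a surjective coloring of $V \times U$ into the $k_1 k_2$ color classes indexed by pairs of colors, which is exactly the condition for a perfect coloring.

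The only mild subtlety is confirming surjectivity of this induced coloring, i.e., that every one of the $k_1 k_2$ color classes is nonempty; this is immediate from the surjectivity of the individual colorings encoded by $P$ and $R$. So there is no genuine obstacle here: the main work is the algebraic identity, which is already Theorem~\ref{structtensprod}, and the rest is a direct verification that the Kronecker product preserves the ``one $1$ per row'' shape of the coloring matrix.
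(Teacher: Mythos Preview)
Your proposal is correct and matches the paper's approach exactly: the paper simply states that the result follows from Theorem~\ref{structtensprod} without spelling out any further details. Your additional verification that $P\otimes R$ is a $(0,1)$-matrix with a single unity per row and that the resulting coloring is surjective onto all $k_1k_2$ colors is implicit in the paper but makes the argument complete.
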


\begin{teorema} \label{colordecprod}
The matrix $P \otimes R$ is a perfect coloring in $k_1k_2$ colors of the Cartesian product of graphs $G \Box H$ with the parameter matrix $I \otimes T + S \otimes I$.
\end{teorema}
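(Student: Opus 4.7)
The plan is to reduce the claim to two independent checks: the algebraic identity $(M\otimes I + I\otimes L)(P\otimes R) = (P\otimes R)(I\otimes T + S\otimes I)$, and the combinatorial restrictions that promote a structure matrix to a coloring matrix. The algebraic identity is already Theorem~\ref{structdecprod}, so I would just invoke it: applied to the perfect structures $(M,P,S)$ and $(L,R,T)$ coming from the two colorings, it yields directly that $(M\otimes I + I\otimes L,\, P\otimes R,\, I\otimes T + S\otimes I)$ is a perfect structure, and $M\otimes I + I\otimes L$ is precisely the adjacency matrix of $G\,\Box\,H$. So this half of the statement is a one-line citation.

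What remains is to verify that the matrix $P\otimes R$ has the shape of a perfect-coloring structure matrix, namely a $(0,1)$-matrix with exactly one unity entry per row and no identically zero column (so the coloring is surjective onto $k_1 k_2$ colors). Since $P$ and $R$ are $(0,1)$-matrices with exactly one $1$ per row, each encoding the colorings $c_G$ and $c_H$, the entry of $P\otimes R$ at row $(u,v)\in V(G)\times V(H)$ and column $(i,j)\in\{1,\dots,k_1\}\times\{1,\dots,k_2\}$ equals $p_{u,i}\,r_{v,j}$, which is $1$ if and only if $c_G(u)=i$ and $c_H(v)=j$. Thus $P\otimes R$ encodes the product coloring $(u,v)\mapsto(c_G(u),c_H(v))$ of $V(G\,\Box\,H)$ in $k_1 k_2$ colors, and surjectivity of $c_G$ and $c_H$ makes this coloring surjective, so every column of $P\otimes R$ contains at least one $1$. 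Nonsingularity (full column rank) transfers from $P$ and $R$ to $P\otimes R$ by the standard rank identity $\mathrm{rank}(P\otimes R)=\mathrm{rank}(P)\cdot\mathrm{rank}(R)$. One also observes in passing that $I\otimes T + S\otimes I$ has nonnegative integer entries, as befits a coloring parameter matrix.

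There is no real obstacle here: once Theorem~\ref{structdecprod} is in hand, everything else is bookkeeping with the Kronecker product, and the resulting object is the familiar coordinatewise coloring of the Cartesian product. The only conceptual point worth spelling out is the matching between the algebraic Kronecker structure and the combinatorial description of the product coloring, so that the reader sees that the parameter entry $(I\otimes T + S\otimes I)_{(i,j),(i',j')}$ equals $\delta_{i,i'}t_{j,j'}+s_{i,i'}\delta_{j,j'}$, which is exactly the number of neighbors of any $(i,j)$-colored vertex of $G\,\Box\,H$ that carry color $(i',j')$.
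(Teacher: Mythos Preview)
Your proof is correct and follows the same approach as the paper, which simply states that Theorems~\ref{structtensprod}--\ref{structnormalprod} imply the coloring versions and gives no further argument. You have supplied the combinatorial bookkeeping (that $P\otimes R$ is a $(0,1)$-matrix with one unit per row and full column rank, hence a genuine coloring matrix) that the paper leaves implicit.
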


In the Cartesian product of graphs, a perfect coloring obtained as the product of some coloring and the trivial coloring in one color is often called reducible.

\begin{teorema} \label{colornormalprod}
The matrix $P \otimes R$  is a perfect coloring in $k_1k_2$ colors of the normal product of graphs $G \boxtimes H$ with the parameter matrix $ S \otimes I + I \otimes T + S \otimes T$.
\end{teorema}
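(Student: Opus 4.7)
The plan is to derive Theorem~\ref{colornormalprod} directly from Theorem~\ref{structnormalprod} by verifying that the Kronecker product of two perfect-coloring structure matrices is itself a valid perfect-coloring structure matrix. Recall from the paragraph defining perfect colorings that the structure matrices of perfect colorings are precisely the full-rank $(0,1)$-matrices that have exactly one unity entry in each row and whose column supports are all nonempty (so that the coloring is surjective).

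First I would invoke Theorem~\ref{structnormalprod} applied to the perfect structures $(M,P,S)$ and $(L,R,T)$. Since by hypothesis these arise from perfect colorings of $G$ and $H$, the theorem immediately yields that the triple $(M \otimes I + I \otimes L + M \otimes L,\ P \otimes R,\ I \otimes T + S \otimes I + S \otimes T)$ is a perfect structure, and the first term is exactly the adjacency matrix of $G \boxtimes H$. So the algebraic identity $M(G \boxtimes H)(P \otimes R) = (P \otimes R)(S \otimes I + I \otimes T + S \otimes T)$ is free.

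The substantive step is to check the combinatorial shape of $P \otimes R$. Index the rows of $P \otimes R$ by pairs $(v,u) \in V(G) \times V(H)$ and the columns by pairs $(i,j) \in \{1,\dots,k_1\} \times \{1,\dots,k_2\}$. By the block definition of the Kronecker product, the $((v,u),(i,j))$-entry is $p_{v,i} \cdot r_{u,j}$. Since $P$ has a single unity in row $v$ — say in column $c_G(v)$ — and $R$ has a single unity in row $u$ — say in column $c_H(u)$ — the row $(v,u)$ of $P \otimes R$ contains exactly one unity, located in column $(c_G(v), c_H(u))$, and all other entries are zero. Hence $P \otimes R$ encodes a well-defined coloring of $V(G \boxtimes H)$ in the pair-colors $(i,j)$. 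Surjectivity of this coloring in all $k_1 k_2$ colors follows from the surjectivity of the individual colorings $c_G$ and $c_H$.

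Finally I would note that $P \otimes R$ has full column rank $k_1 k_2$: this is a standard fact about Kronecker products (both $P$ and $R$ have full column rank by nonsingularity of the two given perfect structures, and $\mathrm{rank}(P \otimes R) = \mathrm{rank}(P)\cdot\mathrm{rank}(R)$). Thus the perfect structure $(M(G \boxtimes H), P \otimes R, S \otimes I + I \otimes T + S \otimes T)$ is nonsingular and its structure matrix has the form required of a perfect coloring, completing the proof. No step here is a real obstacle — the only nontrivial content beyond Theorem~\ref{structnormalprod} is the pointwise inspection of the block entries of $P \otimes R$, which is essentially bookkeeping.
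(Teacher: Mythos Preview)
Your proposal is correct and follows exactly the paper's approach: the paper gives no separate proof of Theorem~\ref{colornormalprod} at all, but merely states that it (together with Theorems~\ref{colortensorprod} and~\ref{colordecprod}) follows from Theorems~\ref{structtensprod}--\ref{structnormalprod}. Your write-up simply spells out the combinatorial verification that $P\otimes R$ has the shape of a perfect-coloring structure matrix, which the paper leaves implicit.
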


To state a similar result for the lexicographic product, we use Theorem~\ref{structlexprod} and Corollary~\ref{colorinJ}.

\begin{teorema} \label{colorlexprod}
The matrix $P \otimes R$ is a perfect coloring in $k_1 k_2$ colors of the lexicographic product of graphs $G \cdot H $ with the parameter matrix $S \otimes (J \cdot \diag(l_1, \dots, l_{k_2})) + I \otimes T$, where $l_1, \ldots, l_{k_2}$ are the numbers of vertices of each color in the perfect coloring $(L,R,T)$.
\end{teorema}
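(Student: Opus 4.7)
The plan is to assemble this from Theorem~\ref{structlexprod} and Corollary~\ref{colorinJ} after verifying the combinatorial side. Theorem~\ref{structlexprod} already tells us that $P\otimes R$ is a perfect structure in $G\cdot H$ with parameter matrix $S\otimes T' + I\otimes T$, provided we can produce an auxiliary perfect structure $(J,R,T')$ for some $T'$. What remains is therefore to (i) identify the correct $T'$, and (ii) confirm that $P\otimes R$ satisfies the additional combinatorial requirement of being a coloring matrix ($(0,1)$-matrix with a single $1$ per row).

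First I would supply the auxiliary structure. Since $R$ encodes a coloring of $V(H)$ into $k_2$ color classes of sizes $l_1,\ldots,l_{k_2}$, the same matrix $R$ is, when viewed on the complete graph on $V(H)$, a perfect coloring; the proof of Corollary~\ref{colorinJ} records this by noting that $(J,R,J\cdot\diag(l_1,\ldots,l_{k_2}))$ is a perfect structure. Thus we may take $T' = J\cdot\diag(l_1,\ldots,l_{k_2})$ and plug into Theorem~\ref{structlexprod} to obtain that $P\otimes R$ is a perfect structure in $G\cdot H$ with parameter matrix $S\otimes(J\cdot\diag(l_1,\ldots,l_{k_2})) + I\otimes T$, which matches the statement.

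Second I would upgrade ``perfect structure'' to ``perfect coloring''. Since $P$ and $R$ are both $(0,1)$-matrices with exactly one $1$ in each row, the entry of $P\otimes R$ in row $(u,v)$ and column $(i,j)$ equals $p_{u,i}r_{v,j}$, which is $1$ precisely for the single pair $(i,j)$ consisting of the color of $u$ under $P$ and the color of $v$ under $R$, and is $0$ otherwise. So $P\otimes R$ is indeed the structure matrix of a coloring of $V(G)\times V(H)$ into $k_1k_2$ colors, and combined with the first step this coloring is perfect with the claimed parameter matrix.

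There is essentially no obstacle here: the content was already deposited in Theorem~\ref{structlexprod} and Corollary~\ref{colorinJ}, and the only bookkeeping is the routine observation that the Kronecker product of two ``one-$1$-per-row'' $(0,1)$-matrices has the same property, so that the resulting structure matrix encodes a genuine coloring.
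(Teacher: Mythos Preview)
Your proposal is correct and follows precisely the route the paper indicates: invoke Corollary~\ref{colorinJ} (specifically the observation in its proof that $(J,R,J\cdot\diag(l_1,\ldots,l_{k_2}))$ is a perfect structure) to supply the auxiliary $T'$, then feed this into Theorem~\ref{structlexprod}, and finally check that $P\otimes R$ is a genuine coloring matrix. The paper itself offers no proof beyond citing these two results, so your write-up is a faithful expansion of the intended argument.
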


In conclusion, we state one more result that is known as the orthogonality theorem for perfect colorings.

\begin{teorema}[Orthogonality of perfect colorings]
Let $(M,P,S)$ and $(M,R,T)$ be perfect colrings of a connected $r$-regular graph on $n$ vertices with the adjacency matrix $M$. Assume that the spectra of the parameter matrices $S$ and $T$ have in common only the eigenvalue $r$. Then for each pair of columns $P_i$ and $R_j$ of the structure matrices $P$ and $R$ their dot product 
$$\langle P_i, R_j \rangle = \frac{l_i m_j}{n},$$
where $l_i$ is the number  of vertices of color $i$ in the coloring $P$, and  $m_j$  is the number of vertices of color $j$ in the coloring $R$.
\end{teorema}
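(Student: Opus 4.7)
My plan is to reduce the statement to an orthogonal decomposition of $P_i$ and $R_j$ in the eigenspaces of the symmetric adjacency matrix $M$, using Property~\ref{columnspaceP} together with the fact that the all-ones vector $\mathbb{1}$ spans the $r$-eigenspace.

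First I would record the standing facts I need. Because $G$ is $r$-regular, $\mathbb{1}$ is an eigenvector of $M$ with eigenvalue $r$, and because $G$ is connected the multiplicity of $r$ is exactly $1$ (the Perron--Frobenius-type fact already invoked in Corollary~\ref{commonJ}). Also, $M$ is symmetric, so eigenspaces of $M$ for distinct eigenvalues are mutually orthogonal. Next, since $P$ is a $(0,1)$-matrix with a single $1$ per row, $P_i$ is the indicator vector of the $i$th color class, so $\langle P_i,\mathbb{1}\rangle = l_i$; similarly $\langle R_j,\mathbb{1}\rangle=m_j$.

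The key step is to apply Property~\ref{columnspaceP}: $P_i$ lies in the sum of eigenspaces of $M$ indexed by $\mathrm{sp}(S)$, and $R_j$ lies in the sum indexed by $\mathrm{sp}(T)$. Writing the orthogonal decompositions
\[
P_i = \tfrac{l_i}{n}\,\mathbb{1} + P_i',\qquad R_j = \tfrac{m_j}{n}\,\mathbb{1} + R_j',
\]
the component $P_i'$ lies in the span of eigenspaces of $M$ associated with eigenvalues in $\mathrm{sp}(S)\setminus\{r\}$, and $R_j'$ lies in the span associated with eigenvalues in $\mathrm{sp}(T)\setminus\{r\}$. By hypothesis these two index sets are disjoint, and by symmetry of $M$ the corresponding subspaces are orthogonal; hence $\langle P_i',R_j'\rangle=0$, while also $\langle \mathbb{1},P_i'\rangle=\langle \mathbb{1},R_j'\rangle=0$. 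Consequently
\[
\langle P_i,R_j\rangle = \bigl\langle \tfrac{l_i}{n}\mathbb{1},\tfrac{m_j}{n}\mathbb{1}\bigr\rangle = \tfrac{l_i m_j}{n^2}\cdot n = \tfrac{l_i m_j}{n},
\]
which is exactly the desired identity.

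I do not anticipate a real obstacle: everything needed is already packaged in the preceding properties. The only point that requires a moment's care is confirming that the $r$-eigenspace is one-dimensional (so that the $\mathbb{1}$-projections of $P_i$ and $R_j$ cannot hide further interactions), which is where the connectedness hypothesis enters.
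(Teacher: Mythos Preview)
Your proof is correct and is essentially the same argument as the paper's: both use Property~\ref{columnspaceP} to place the projections $P_i-\tfrac{l_i}{n}\mathbb{1}$ and $R_j-\tfrac{m_j}{n}\mathbb{1}$ into eigenspaces of $M$ indexed by the disjoint sets $\mathrm{sp}(S)\setminus\{r\}$ and $\mathrm{sp}(T)\setminus\{r\}$, and then invoke orthogonality of eigenspaces of the symmetric matrix $M$. Your explicit remark that connectedness is what forces the $r$-eigenspace to be one-dimensional is exactly the point the paper uses implicitly when it asserts that all $r$-eigenvectors are collinear to $\mathbb{1}$.
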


In other words, this theorem states that if we narrow a perfect coloring $P$ on a color class of some other coloring $R$, such that the spectra of parameter matrices of these two colorings intersect only at the degree of the graph, then the density $l_i / n$ of color $i$ of the perfect coloring $P$ does not change.

\begin{proof}
By the definition, we have  $l_i = \langle P_i , \mathbb{1} \rangle$ and $m_j = \langle R_j,\mathbb{1} \rangle$. 

Since all eigenvectors of $M$ corresponding to the eigenvalue $r$ are collinear to the vector $\mathbb{1}$, Property~\ref{columnspaceP} implies that the vector $f = P_i - \frac{l_i}{n} \mathbb{1}$ belongs to the sum of eigenspaces of the matrix $M$ corresponding the eigenvalues $sp(S) \setminus \{ d\}$. Similarly, the vector   $g = R_j - \frac{m_j}{n} \mathbb{1}$ belongs to the sum of eigenspaces corresponding to the eigenvalues $sp(T) \setminus \{ d\}$.

By the conditions, the sets  $sp(S) \setminus \{ d\}$ and $sp(T) \setminus \{ d\}$ are disjoint and the matrix $M$ is diagonalizable. It follows that the eigenspaces corresponding to these eigenvalues are orthogonal, and, consequently, the vectors $f$ and $g$ are also ortogonal. Thus
$$0 = \langle f, g \rangle = \left\langle P_i - \frac{l_i}{n} \mathbb{1}, R_j - \frac{m_j}{n} \mathbb{1} \right\rangle = \langle P_i, R_j \rangle - \frac{l_i m_j}{n},$$
that gives the statement of the theorem.
\end{proof}

\section*{Acknowledgements}

The author is grateful to S.V. Avgustinovich and V.N. Potapov for useful discussion and constant attention to this work. The work was funded by the Russian Science Foundation under grant 18--11--00136 (Sections 2 -- 4) and is supported in part by the Young Russian Mathematics award (Section 1).


\begin{thebibliography}{99}



\bibitem{brower}
A.E. Brouwer, W.H. Haemers. Spectra of graphs. Springer-Verlag, New York, 2012.

\bibitem{doob}
D.M. Cvetkovi\'c, M. Doob, H. Sachs. Spectra of graphs (3rd edition). Johann Ambrosius Barth Verlag, Heidelberg-Leipzig, 1995.

\bibitem{delsart}
P. Delsarte. An algebraic approach to the association schemes of coding Theory. Philips research reports supplements, no. 10. N.V. Philips' Gloeilampenfabrieken, Eindhoven,  1973.

\bibitem{flaass}
D.G. Fon-der-Flaass. Perfect $2$-colorings of a hypercube. Sib. Math. J.  \textbf{48}(4) (2007), 740--745. (Translated from  Sib. Mat. Zh. \textbf{48}(4) (2007), 923--930).

\bibitem{godalgcomb}
C.D. Godsil. Algebraic combinatorics.  Chapman and Hall, New York, 1993.

\bibitem{godsil}
C.D. Godsil. G. Royle. Algebraic graph theory. Graduated texts in mathematics \textbf{207}, Springer, 2001.


\bibitem{handprod}
R. Hammack, W. Imrich, S. Klav\v{z}ar. Handbook of product graphs.  Boca Raton: CRC Press. 2nd edition, 2011. 

\bibitem{krotov}
D.S. Krotov.  On weight distributions of perfect colorings and completely regular codes. Des. Codes Cryptogr. \textbf{61} (2011), 315--329.

\bibitem{perfz2}
D.S. Krotov. Perfect colorings of $\mathbb{Z}^2$: nine colors. ArXiv:0901.0004.

\bibitem{lis}
M.A. Lisitsyna, S.V. Avgustinovich. Perfect colorings in the prism graph.  Sib. \'Electron. Mat. Izv.   \textbf{12} (2016), 1116--1128 (in Russian).

\bibitem{val}
A.A. Valyuzhenich. Minimum supports of eigenfunctions of Hamming graphs. Discrete Mathematics \textbf{340} (2017), 1064--1068.

\bibitem{valvor}
A.A. Valyuzhenich, K.V. Vorob'ev. Minimum supports of functions on the Hamming graphs with spectral constraints. Discrete Mathematics \textbf{342} (2019), 1351--1360.

\end{thebibliography}
\end{document}